\documentclass[11pt]{article}
\usepackage[margin=1in]{geometry}
\usepackage{amsmath,amssymb,amsthm}
\usepackage{enumitem}
\usepackage{tikz}
\usepackage{pgfplots}
\pgfplotsset{compat=1.18}
\usetikzlibrary{arrows.meta,positioning,calc}
\usepackage{float}
\usepackage{caption}
\usepackage{hyperref}
\hypersetup{colorlinks=true,linkcolor=blue,citecolor=blue,urlcolor=blue}

\newtheorem{theorem}{Theorem}[section]
\newtheorem{definition}[theorem]{Definition}
\newtheorem{lemma}[theorem]{Lemma}
\newtheorem{proposition}[theorem]{Proposition}
\newtheorem{corollary}[theorem]{Corollary}
\newtheorem{example}[theorem]{Example}
\newtheorem{remark}[theorem]{Remark}
\newtheorem{assumption}[theorem]{Assumption}

\DeclareMathOperator{\Fix}{Fix}
\newcommand{\R}{\mathbb{R}}
\newcommand{\N}{\mathbb{N}}

\title{\bfseries Nadler's Theorem on Generalized Metric Spaces\\ with Weighted Graphs}

\author{
{\large Ismat Beg$^{1}$\qquad Muhammad Aslam Javed$^{2}$}\\[1.2ex]
{\small $^{1}$\,Department of Mathematics \& Statistical Sciences, Lahore School of Economics,}\\
{\small Lahore, Pakistan.\quad\texttt{ibeg@lahoreschool.edu.pk}}\\[0.8ex]
{\small $^{2}$\,Department of Mathematics, Faculty of Science \& Technology, University of Central Punjab,}\\
{\small Lahore, Pakistan \quad\texttt{m.a.javed@ucp.edu.pk}}
}

\date{}

\begin{document}
\maketitle

\begin{abstract}
We introduce a generalized metric space endowed with a weighted graph and establish sufficient conditions for the existence of fixed points for set-valued mappings that satisfy a contractive condition only along the links of the graph. To demonstrate the applicability and significance of our results, several illustrative examples are presented. As an application, we establish the solvability of a class of integral problems involving set-valued data.

\medskip
\noindent\textbf{Keywords:} Fixed point; generalized metric space; weighted graph; set-valued mapping; level cut; rate of convergence.

\noindent\textbf{Mathematics Subject Classifications (2020):} 47H10; 05C22; 47H04; 54H25.
\end{abstract}

\section{Introduction}\label{sec:intro}

Nadler \cite{Nadler1969} generalized the Banach contraction principle and proved that a set-valued mapping with nonempty, closed, bounded values, contractive in the Hausdorff distance, possesses a fixed point. Subsequent developments of this result move along two distinct axes; The first axis weakens the structure carried by the distance. Relaxing the triangle inequality to $d(x,z)\le M\big(d(x,y)+d(y,z)\big)$ with a fixed constant $M\ge1$ yields the $b$-metric spaces of Czerwik \cite{Czerwik1993}, where the set-valued contraction principle survives once the contraction constant is measured against $M$ \cite{Czerwik2022}. Abandoning symmetry, or trading the numerical triangle inequality for qualitative continuity requirements, leads to the \emph{generalized metric spaces} of Reich and Zaslavski \cite{ReichZaslavski2022}, later combined with a graph structure in \cite{ReichZaslavski2026}. We stress that the term ``generalized metric space'' is used in several inequivalent senses in the literature; throughout this paper it denotes the asymmetric, relaxed-triangle structure of Reich and Zaslavski recalled below, and not, for instance, the $\nu$-generalized metric spaces of Suzuki \cite{Suzuki2017}. Parallel efforts replace the metric by a partial metric and build a partial Hausdorff distance suited to the set-valued theory \cite{AydiAbbasVetro2012, BegPathak2018}, or weaken the contractive requirement itself \cite{Vetro2015}.

The second axis reduces the collection of pairs on which the contractive estimate is demanded. Ran and Reurings \cite{RanReurings2004} and Nieto and Rodr\'{i}guez-L\'{o}pez \cite{NietoRodriguezLopez2005} noticed that on a partially ordered metric space it suffices to control distances between comparable elements. Jachymski \cite{Jachymski2008} identified a partial order as a particular directed graph and restated the contraction principle for mappings compatible with an arbitrary graph, thereby subsuming the ordered-space theorems. For set-valued mappings this programme was pursued by Beg, Butt and Radojevi\'{c} \cite{BegButtRadojevic2010} and by Beg and Butt \cite{BegButt2013}, who imposed the Hausdorff contraction only along the edges of a directed graph, by Nicolae, O'Regan and Petru\c{s}el \cite{NicolaeOReganPetrusel2011} for generalized multivalued contractions on a graph, and by Samreen, Kamran and Shahzad \cite{SamreenKamranShahzad2013} in the $b$-metric setting.

Against this background the exact combination treated here -- set-valued mappings, on an \emph{asymmetric} generalized metric space, contractive along the edges of a \emph{weighted} directed graph -- appears not to have been considered. The novelty is not merely the co-occurrence of these three ingredients. The weighting is made to carry quantitative information: the grade of an edge selects the contraction factor applied along it, and because the successive approximations climb into ever more strongly weighted edges, the weights yield a convergence estimate (Proposition~\ref{prop:adaptive}) strictly sharper than any single-threshold bound, and they organize the space into threshold layers whose optimal choice we determine (Proposition~\ref{prop:tradeoff}). Sections~\ref{sec:main}--\ref{sec:applications} establish the theory, the sharpened rate, the trade-off, and three worked models together with the corresponding applications; Section~\ref{sec:conclusion} concludes. 

\section{Generalized Metric Spaces with Weighted Graphs}\label{sec:prelim}

Throughout, $X$ is a nonempty set and $d:X\times X\to[0,\infty)$ satisfies
\begin{equation}\label{eq:zero}
d(x,y)=0\iff x=y .
\end{equation}
Following \cite{ReichZaslavski2022}, we impose instead two conditions relating a distance to its reverse and to composite distances.

\begin{assumption}[symmetry modulus]\label{ass:A1}
For every $\varepsilon>0$ there is $\delta_1(\varepsilon)>0$ such that $d(x,y)\le\delta_1(\varepsilon)$ implies $d(y,x)\le\varepsilon$.
\end{assumption}

\begin{assumption}[triangle modulus]\label{ass:A2}
For every $\varepsilon>0$ there is $\delta_2(\varepsilon)>0$ such that $d(x,y)\le\delta_2(\varepsilon)$ and $d(y,z)\le\delta_2(\varepsilon)$ jointly imply $d(x,z)\le\varepsilon$.
\end{assumption}

The moduli may be chosen non-decreasing with $\delta_i(\varepsilon)\le\varepsilon$. The sets
\[
U(\varepsilon):=\{(x,y)\in X\times X:\ d(x,y)\le\varepsilon\ \text{and}\ d(y,x)\le\varepsilon\},\qquad \varepsilon>0,
\]
form a base for a uniformity on $X$; this uniformity is metrizable and we fix a metric $d_1$ inducing it, assuming throughout that $(X,d_1)$ is complete. By construction $d_1(x_n,x)\to0$ implies $d(x_n,x)\to0$ and $d(x,x_n)\to0$, and conversely. A subset of $X$ is \emph{bounded} when its $d$-diameter is finite. We refer to $(X,d)$, equipped with Assumptions~\ref{ass:A1}--\ref{ass:A2} and completeness of $(X,d_1)$, as a \emph{generalized metric space}.

In the \emph{quantitative regime}, in which this paper works, both moduli are subsumed by a single numerical requirement: there exists $M\ge1$ with
\begin{equation}\label{eq:M}
d(x,z)\ \le\ M\big(d(x,y)+d(y,z)\big)\qquad(x,y,z\in X).
\end{equation}
Setting $y=x$ in \eqref{eq:M} forces $M\ge1$; the inequality yields Assumption~\ref{ass:A2} with $\delta_2(\varepsilon)=\varepsilon/(2M)$, while Assumption~\ref{ass:A1} is retained separately, since \eqref{eq:M} constrains composite distances but not the asymmetry of $d$. When $d$ is moreover symmetric, \eqref{eq:M} is exactly the defining inequality of a $b$-metric space with constant $M$ \cite{Czerwik1993}, and $M=1$ returns an ordinary metric space.

A \emph{weighted graph} on $X$ is a pair $(G,w)$ in which $G$ is a directed graph with vertex set $X$ and edge set $E(G)\subseteq X\times X$ containing every loop $(x,x)$, and $w:E(G)\to(0,1]$ assigns to each edge its \emph{grade}. For $\alpha\in(0,1]$ the \emph{level cut} at $\alpha$ is the directed graph $G_\alpha$ on $X$ whose edge set is
\begin{equation}\label{eq:levelcut}
E_\alpha:=\{e\in E(G):\ w(e)\ge\alpha\}.
\end{equation}
Since $\beta\le\alpha$ gives $E_\alpha\subseteq E_\beta$, the level cuts form a nested decreasing family, illustrated in Figure~\ref{fig:levelcuts}. Write $\widetilde G_\alpha$ for the symmetrization of $G_\alpha$, obtained by disregarding edge orientations. Given $K\subseteq X$ and $x,y\in K$, a \emph{$\widetilde G_\alpha$-chain} in $K$ from $x$ to $y$ is a finite sequence $x=\xi_0,\xi_1,\dots,\xi_q=y$ in $K$ with $(\xi_i,\xi_{i+1})\in E_\alpha$ or $(\xi_{i+1},\xi_i)\in E_\alpha$ for each $i$; its length is $q$. We set
\[
N(\alpha):=\sup_{x,y\in K}\big(\text{shortest }\widetilde G_\alpha\text{-chain length from }x\text{ to }y\big)\in\N\cup\{\infty\},
\quad
A:=\{\alpha\in(0,1]: N(\alpha)<\infty\},
\]
the supremum being $\infty$ when some pair admits no chain. Thus $N(\alpha)$ is the diameter of $\widetilde G_\alpha$ on $K$, and $\alpha\in A$ says that $\widetilde G_\alpha$ is connected on $K$ with uniformly bounded diameter. As $\alpha$ increases, edges are discarded and $N(\alpha)$ is non-decreasing.

\begin{figure}[H]
\centering
\caption{}
\label{fig:levelcuts}
\begin{tikzpicture}[
  every node/.style={circle,draw,inner sep=1.4pt,minimum size=6.5mm,font=\small},
  edg/.style={-{Stealth[length=2mm]},thick},
  gone/.style={-{Stealth[length=2mm]},draw=black!22,thick},
  lab/.style={draw=none,circle=false,inner sep=1pt,font=\scriptsize,fill=white}
]
\begin{scope}
\node (a1) at (0,1.5) {$v_1$};
\node (a2) at (1.8,1.5) {$v_2$};
\node (a3) at (1.8,0) {$v_3$};
\node (a4) at (0,0) {$v_4$};
\node (a5) at (3.5,0.75) {$v_5$};
\draw[edg] (a1)--(a2) node[lab,midway,above]{$0.9$};
\draw[edg] (a2)--(a3) node[lab,midway,right]{$0.6$};
\draw[edg] (a3)--(a4) node[lab,midway,below]{$0.9$};
\draw[edg] (a1)--(a3) node[lab,pos=0.42,above right]{$0.7$};
\draw[edg] (a3)--(a5) node[lab,midway,below right]{$0.4$};
\node[draw=none,font=\small] at (1.7,-0.95) {(a) $(G,w)$};
\end{scope}
\begin{scope}[xshift=5.3cm]
\node (b1) at (0,1.5) {$v_1$};
\node (b2) at (1.8,1.5) {$v_2$};
\node (b3) at (1.8,0) {$v_3$};
\node (b4) at (0,0) {$v_4$};
\node (b5) at (3.5,0.75) {$v_5$};
\draw[edg] (b1)--(b2);
\draw[edg] (b2)--(b3);
\draw[edg] (b3)--(b4);
\draw[edg] (b1)--(b3);
\draw[gone] (b3)--(b5);
\node[draw=none,font=\small] at (1.7,-0.95) {(b) $G_\alpha$, $\alpha=0.5$};
\end{scope}
\begin{scope}[xshift=10.6cm]
\node (c1) at (0,1.5) {$v_1$};
\node (c2) at (1.8,1.5) {$v_2$};
\node (c3) at (1.8,0) {$v_3$};
\node (c4) at (0,0) {$v_4$};
\node (c5) at (3.5,0.75) {$v_5$};
\draw[edg] (c1)--(c2);
\draw[gone] (c2)--(c3);
\draw[edg] (c3)--(c4);
\draw[gone] (c1)--(c3);
\draw[gone] (c3)--(c5);
\node[draw=none,font=\small] at (1.7,-0.95) {(c) $G_\alpha$, $\alpha=0.8$};
\end{scope}
\end{tikzpicture}
\end{figure}

Figure~\ref{fig:levelcuts} shows a weighted graph together with two of its level cuts, loops being suppressed in the drawing. Raising the threshold $\alpha$ deletes the weaker edges, shown faded there: at $\alpha=0.5$ the vertex $v_5$ is severed, and at $\alpha=0.8$ only the two strongest edges survive. Since the family $\{E_\alpha\}$ decreases as $\alpha$ increases, a higher threshold gives a stronger contraction guarantee on a sparser graph.

\begin{remark}\label{rem:Nalpha}
The invariant $N(\alpha)$, and the admissible set $A$, will not be used as hypotheses of the main theorem: that theorem asks only for the local reachability property $(A_\alpha)$ of Definition~\ref{def:propertyA}, which the orbit it constructs supplies along a single chain. Finiteness $N(\alpha)<\infty$ is a convenient \emph{global} sufficient condition guaranteeing that property on all of $K$, and it is exactly the setting of the finite network of Section~\ref{sec:applications}, where it is computable; but it is not necessary, and the trade-off Example~\ref{ex:interval} deliberately operates at a threshold outside $A$ where the global level cut is disconnected yet the orbit still succeeds. We keep $N(\alpha)$ because it measures precisely how the graph thins as the threshold rises, which is the mechanism the weighting exploits.
\end{remark}

Fix a nonempty $K\subseteq X$ closed in $(X,d_1)$, and let $CB(K)$ denote the family of nonempty subsets of $K$ that are closed in $(X,d_1)$ and bounded in $d$. For $A,B\in CB(K)$ and $a\in A$ put $d(a,B):=\inf_{b\in B}d(a,b)$, and define the \emph{excess of $A$ over $B$} by
\begin{equation}\label{eq:excess}
H(A,B)\ :=\ \sup_{a\in A}d(a,B)\ =\ \sup_{a\in A}\inf_{b\in B}d(a,b).
\end{equation}
We deliberately refrain from symmetrizing $H$. Since $d$ itself is not assumed symmetric, and since the contractive estimate below is imposed along a directed edge $(x,y)$, the appropriate set-level quantity is the one-sided excess taken in the direction of that edge rather than a two-sided Hausdorff distance. This is a weaker requirement than a bound on $\max\{H(A,B),H(B,A)\}$; it is what the forward construction of the orbit actually consumes, and, as Proposition~\ref{prop:nonunique} shows, it is genuinely weaker, since an edge $(x,y)$ imposes no constraint at all once $Fx$ already reaches into $Fy$. The next elementary principle is the only selection device the construction requires.

\begin{lemma}[approximate selection]\label{lem:selection}
Let $A,B\in CB(K)$, $a\in A$ and $\gamma>0$. Then there exists $b\in B$ with
\[
d(a,b)\ \le\ H(A,B)+\gamma .
\]
\end{lemma}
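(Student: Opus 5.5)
The plan is to argue directly from the definitions of $d(a,B)$ as an infimum and of $H(A,B)$ as a supremum. No closedness or completeness is needed, only finiteness of the relevant quantities.

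First we check that $H(A,B)<\infty$, so that the target inequality is meaningful. Fix some $b_0\in B$; it exists because $B\neq\emptyset$. For any $a'\in A$, inequality \eqref{eq:M} gives $d(a',b_0)\le M\bigl(d(a',a)+d(a,b_0)\bigr)$. Since $A$ is bounded in $d$, the term $d(a',a)$ is at most $\operatorname{diam}A<\infty$. Hence $d(a',B)\le d(a',b_0)\le M(\operatorname{diam}A+d(a,b_0))$, a bound independent of $a'$, and so $H(A,B)<\infty$. Outside the quantitative regime one would need a bound on distances between two bounded sets. Since the paper works in the regime \eqref{eq:M}, I will rely on it. In any case, if $H(A,B)=\infty$ the claim is vacuous, because any $b\in B$ works.

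Next, since $a\in A$, the definition of the excess as a supremum gives
\[
d(a,B)\ \le\ H(A,B).
\]
The quantity $d(a,B)=\inf_{b\in B}d(a,b)$ is a finite infimum of a nonempty set of nonnegative reals. By the defining property of the infimum, for the given $\gamma>0$ there is $b\in B$ with
\[
d(a,b)\ <\ d(a,B)+\gamma.
\]
Combining the two displays yields $d(a,b)\le H(A,B)+\gamma$, which is the claim.

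There is no real obstacle here. The only point requiring care is the finiteness check in the second paragraph, which is where the boundedness of $A$ and the relaxed triangle inequality \eqref{eq:M} enter. The closedness of $B$ is not used; it would matter only for the case $\gamma=0$, which the lemma deliberately avoids.
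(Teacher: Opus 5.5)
Your proof is correct and is the paper's argument: take $d(a,B)\le H(A,B)$ from the supremum, then pick $b\in B$ within $\gamma$ of the infimum $d(a,B)$. The finiteness check via \eqref{eq:M} and the boundedness of $A$ is harmless but superfluous, so it is not ``the point requiring care.'' As you note yourself, the case $H(A,B)=\infty$ is trivial, and the paper stresses that the lemma uses nothing about $d$ beyond non-negativity.
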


\begin{proof}
By the definition of the supremum in \eqref{eq:excess} we have $d(a,B)\le H(A,B)$, and by the definition of the infimum defining $d(a,B)$ there is $b\in B$ with $d(a,b)\le d(a,B)+\gamma$. Combining the two estimates gives the assertion.
\end{proof}

Lemma~\ref{lem:selection} requires no compactness and uses nothing about $d$ beyond non-negativity; it operates on the full class $CB(K)$, which is exactly the class of values admitted in the classical set-valued theory. This is worth recording, since the analogous statement is not available in every enlargement of the metric framework: the corresponding fuzzy construction, for instance, is known to satisfy the required triangle-type estimate only on the smaller family of compact values.

\section{Main Results}\label{sec:main}

Throughout this section $(X,d)$ is a generalized metric space obeying \eqref{eq:M}, $(G,w)$ is a weighted graph on $X$, and $K\subseteq X$ is nonempty and closed in $(X,d_1)$.

\begin{definition}\label{def:compatible}
A mapping $F:K\to CB(K)$ is \emph{grade-compatible} with $(G,w)$ if, whenever $(x,y)\in E(G)$ with $x,y\in K$ and $y\in F(x)$, then for every $\gamma>0$ and every $v\in F(y)$ satisfying $d(y,v)\le H(Fx,Fy)+\gamma$ one has
\[
(y,v)\in E(G)\qquad\text{and}\qquad w(y,v)\ \ge\ w(x,y).
\]
\end{definition}

Definition~\ref{def:compatible} is the set-valued form of the edge-preservation requirement of \cite{Jachymski2008}, reinforced by the monotonicity clause $w(y,v)\ge w(x,y)$. The clause forbids a step from weakening the link it travels along, and consequently confines an entire sequence of successive selections to a single level cut; moreover it forces the grades encountered along that sequence to be non-decreasing, a fact that Proposition~\ref{prop:adaptive} converts into a sharpened rate. It is formulated for precisely the configuration produced by the construction below, namely the terminal vertex $y$ of an edge that already belongs to $F(x)$ together with a point $v\in F(y)$ selected near $y$; a wider quantification would contribute nothing used in the proofs.

\begin{definition}\label{def:contraction}
Let $\{\lambda_\alpha\}_{\alpha\in(0,1]}\subseteq[0,1)$ be non-increasing in $\alpha$, so that $\alpha\le\beta$ implies $\lambda_\beta\le\lambda_\alpha$ and a higher grade never carries a weaker contraction. A grade-compatible mapping $F:K\to CB(K)$ is a \emph{graded set-valued contraction} with respect to $(G,w)$ and $\{\lambda_\alpha\}$ if
\begin{equation}\label{eq:contraction}
H(Fx,Fy)\ \le\ \lambda_{w(x,y)}\,d(x,y)\qquad\text{for every }(x,y)\in E(G)\text{ with }x,y\in K.
\end{equation}
\end{definition}

If the graph is complete and every edge is graded $1$, condition \eqref{eq:contraction} reduces to the requirement $H(Fx,Fy)\le\lambda_1 d(x,y)$ for all pairs, that is, to the classical contractive estimate for the one-sided excess. The grading refines this by letting the contraction factor follow the strength of the edge.

\begin{remark}\label{rem:external}
For a single-valued self-map $T$ a grading may be synthesized from the map itself through the ratios $d(T^{n+1}x,T^{n+1}y)/d(T^{n}x,T^{n}y)$ of consecutive iterates, since $T^{n}(x)$ is a well-defined point for every $n$. A genuinely set-valued $F$ admits no canonical iterate $F^{n}(x)$: each stage involves a choice of a point in a set, and distinct choices generate distinct ratios at the same pair. The grading of a graded set-valued contraction is therefore data supplied alongside the mapping, on the same footing as the single contraction constant of the classical theorem, and is not recoverable from $F$ alone.
\end{remark}

\begin{definition}[property $(A_\alpha)$]\label{def:propertyA}
The weighted graph $(G,w)$ has \emph{property $(A_\alpha)$} on $K$ if every sequence $\{x_n\}\subseteq K$ with $(x_n,x_{n+1})\in E_\alpha$ for all $n$ and $x_n\to x\in K$ in $(X,d_1)$ possesses a subsequence $\{x_{n_k}\}$ with $(x_{n_k},x)\in E_\alpha$ for every $k$.
\end{definition}

\begin{theorem}\label{thm:main}
Let $(X,d)$ be a generalized metric space satisfying \eqref{eq:M} with $(X,d_1)$ complete, let $(G,w)$ be a weighted graph on $X$, and let $K\subseteq X$ be nonempty and closed in $(X,d_1)$. Let $F:K\to CB(K)$ be a graded set-valued contraction with respect to $(G,w)$ and $\{\lambda_\alpha\}$, and let $\alpha\in(0,1]$ be such that
\begin{equation}\label{eq:hyp}
M\lambda_\alpha<1
\end{equation}
and $(G,w)$ has property $(A_\alpha)$ on $K$. Then for every $x_0\in K$ admitting $x_1\in F(x_0)$ with $(x_0,x_1)\in E_\alpha$ there exists $x^\ast\in K$ with
\[
x^\ast\in F(x^\ast),
\]
obtained as the limit in $(X,d_1)$ of a sequence of successive selections issuing from $x_0$. Moreover, for every $\lambda'\in(\lambda_\alpha,1/M)$ there is a constant $C>0$, depending only on $d(x_0,x_1)$, $\lambda_\alpha$, $\lambda'$ and $M$, such that the constructed sequence obeys
\begin{equation}\label{eq:rate}
\sup_{m\ge n}\,d(x_n,x_m)\ \le\ C\,(\lambda')^{\,n}\qquad(n\ge0).
\end{equation}
\end{theorem}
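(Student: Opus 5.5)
We construct the orbit by a Nadler-type iteration with summable slack, keeping every step inside the level cut $E_\alpha$. Fix $\lambda'\in(\lambda_\alpha,1/M)$. Starting from $x_0,x_1$ with $x_1\in F(x_0)$ and $(x_0,x_1)\in E_\alpha$, we proceed inductively. Suppose $x_{n}\in F(x_{n-1})$ with $(x_{n-1},x_n)\in E(G)$ and $w(x_{n-1},x_n)\ge\alpha$. Lemma~\ref{lem:selection}, applied with $A=F(x_{n-1})$, $B=F(x_n)$, $a=x_n$ and slack $\gamma_n>0$, gives $x_{n+1}\in F(x_n)$ with
\[
d(x_n,x_{n+1})\le H(Fx_{n-1},Fx_n)+\gamma_n .
\]
Grade-compatibility (Definition~\ref{def:compatible}) applies exactly to this configuration. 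It yields $(x_n,x_{n+1})\in E(G)$ and $w(x_n,x_{n+1})\ge w(x_{n-1},x_n)\ge\alpha$, so the whole sequence stays in $E_\alpha$. Since $\lambda_\beta\le\lambda_\alpha$ for $\beta\ge\alpha$, \eqref{eq:contraction} gives
\[
d(x_n,x_{n+1})\le\lambda_\alpha d(x_{n-1},x_n)+\gamma_n .
\]
We choose $\gamma_n$ adaptively as $\gamma_n=(\lambda'-\lambda_\alpha)\,d(x_{n-1},x_n)$ when $d(x_{n-1},x_n)>0$. If instead $d(x_{n-1},x_n)=0$, then $x_{n-1}=x_n\in F(x_{n-1})$ and we stop with a fixed point. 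With this choice, induction gives $d(x_n,x_{n+1})\le(\lambda')^n d(x_0,x_1)$.

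The Cauchy estimate \eqref{eq:rate} is the step needing care, because \eqref{eq:M} loses a factor $M$ at each application. Naive iteration along the chain $x_n,x_{n+1},\dots,x_m$ gives
\[
d(x_n,x_m)\le\sum_{k\ge0}M^{k+1}d(x_{n+k},x_{n+k+1}),
\]
which converges only when $M\lambda'<1$. That is exactly our standing choice $\lambda'<1/M$, so
\[
d(x_n,x_m)\le\frac{M\,d(x_0,x_1)}{1-M\lambda'}\,(\lambda')^n=:C(\lambda')^n .
\]
The resulting $C$ depends only on $d(x_0,x_1)$, $M$ and $\lambda'$ (and on $\lambda_\alpha$ through the admissible range of $\lambda'$). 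Applying the same argument to $d(x_m,x_n)$ for $m>n$ is unnecessary for the sup bound as stated. For Cauchy-ness in $d_1$, however, we also need the reverse distances to be small. Assumption~\ref{ass:A1} handles this: once $d(x_n,x_m)\le\delta_1(\varepsilon)$ we get $d(x_m,x_n)\le\varepsilon$, so $(x_n,x_m)\in U(\varepsilon)$ eventually. Hence $\{x_n\}$ is $d_1$-Cauchy, converges to some $x^\ast$, and $x^\ast\in K$ because $K$ is closed.

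To show $x^\ast\in F(x^\ast)$, we use property $(A_\alpha)$. It provides a subsequence with $(x_{n_k},x^\ast)\in E_\alpha$, so by \eqref{eq:contraction}
\[
d(x_{n_k+1},F x^\ast)\le H(Fx_{n_k},Fx^\ast)\le\lambda_\alpha\,d(x_{n_k},x^\ast)\to0 .
\]
Pick $y_k\in F(x^\ast)$ with $d(x_{n_k+1},y_k)\to0$. Then $d(x^\ast,y_k)\le M\bigl(d(x^\ast,x_{n_k+1})+d(x_{n_k+1},y_k)\bigr)\to0$. Combined with Assumption~\ref{ass:A1}, this gives $d(y_k,x^\ast)\to0$, so $y_k\to x^\ast$ in $d_1$. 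Since $F(x^\ast)$ is $d_1$-closed, $x^\ast\in F(x^\ast)$.

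The main obstacle is the direction bookkeeping in this last step and in the Cauchy step, since $d$ is asymmetric. The contraction only controls $d(x_{n_k+1},\cdot)$ in the forward direction, while $d_1$-convergence supplies both $d(x^\ast,x_{n_k+1})\to0$ and $d(x_{n_k+1},x^\ast)\to0$. The argument therefore has to combine these via \eqref{eq:M} and Assumption~\ref{ass:A1}, and a wrong orientation would break it.
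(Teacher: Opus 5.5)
Your proof is correct, and its architecture matches the paper's. You use the same forward selection through Lemma~\ref{lem:selection} and grade-compatibility to keep every step in $E_\alpha$. Your unrolled sum $\sum_k M^{k+1}d(x_{n+k},x_{n+k+1})$ is the paper's induction on $p$ in \eqref{eq:uniform}, with the same constant shape $C=MC_1/(1-M\lambda')$. The closing step via $(A_\alpha)$ is also the same, except that you use \eqref{eq:M} where the paper uses Assumption~\ref{ass:A2}.

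The real difference is the slack. The paper takes the additive, $\lambda'$-free slack $\gamma_n=\beta_0\lambda_\alpha^{\,n-1}$, which yields only the inhomogeneous recursion \eqref{eq:recursion}. It must then unwind to \eqref{eq:gap} and absorb the factor $n-1$ into $(\lambda')^{n-1}$. This leaves a non-explicit $C_1$ that also depends on $\beta_0$, and it forces the case $\lambda_\alpha=0$ to be set aside, since there that slack vanishes for $n\ge2$. Your relative slack $\gamma_n=(\lambda'-\lambda_\alpha)\,d(x_{n-1},x_n)$ is Nadler's original device. It gives $d(x_n,x_{n+1})\le\lambda'\,d(x_{n-1},x_n)$ directly, covers $\lambda_\alpha=0$ with no special case, and produces the explicit constant $C=M\,d(x_0,x_1)/(1-M\lambda')$.

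What your choice costs is the order of quantifiers. Your orbit is built after $\lambda'$ is fixed, so you prove only this: for each $\lambda'$ there is \emph{some} orbit, whose limit may depend on $\lambda'$, obeying \eqref{eq:rate}. The theorem asserts that the single constructed orbit obeys \eqref{eq:rate} for every $\lambda'\in(\lambda_\alpha,1/M)$ at once. An orbit built for one $\lambda'$ tells you nothing about a smaller $\lambda''\in(\lambda_\alpha,\lambda')$; the paper's $\lambda'$-independent slack is exactly what secures this uniformity.

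You can keep your cleaner recursion and repair the point. Take $\gamma_n=\varepsilon_n\,d(x_{n-1},x_n)$ with a fixed sequence $\varepsilon_n\downarrow0$, say $\varepsilon_n=2^{-n}$, and write $a_n:=d(x_{n-1},x_n)$. Then $a_{n+1}\le a_1\prod_{k=1}^{n}(\lambda_\alpha+\varepsilon_k)\le c\,(\lambda')^{n}a_1$ for every $\lambda'>\lambda_\alpha$, where $c$ depends only on $\lambda_\alpha$ and $\lambda'$. Your telescoping then gives \eqref{eq:rate} for this one orbit with $C=c\,M\,d(x_0,x_1)/(1-M\lambda')$.
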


\begin{proof}
Write $\lambda:=\lambda_\alpha$ and fix $\beta_0>0$. If $\lambda=0$ the construction below gives $d(x_1,x_2)\le\beta_0$ with $\beta_0$ arbitrary and all subsequent estimates hold trivially, so assume $\lambda\in(0,1)$.

\emph{Construction.} Put $a_1:=d(x_0,x_1)$. Suppose $x_{n-1},x_n$ have been constructed with $(x_{n-1},x_n)\in E_\alpha$ and $x_n\in F(x_{n-1})$. Applying Lemma~\ref{lem:selection} with $A=F(x_{n-1})$, $B=F(x_n)$, $a=x_n$ and slack $\gamma_n:=\beta_0\lambda^{\,n-1}$ produces $x_{n+1}\in F(x_n)$ with
\[
a_{n+1}:=d(x_n,x_{n+1})\ \le\ H\big(F(x_{n-1}),F(x_n)\big)+\gamma_n .
\]
Grade-compatibility (Definition~\ref{def:compatible}) gives $(x_n,x_{n+1})\in E(G)$ with $w(x_n,x_{n+1})\ge w(x_{n-1},x_n)$; since $w(x_0,x_1)\ge\alpha$, induction yields $w(x_n,x_{n+1})\ge\alpha$ for every $n$, so that every edge of the sequence lies in $E_\alpha$. As $w(x_{n-1},x_n)\ge\alpha$ and $\{\lambda_\alpha\}$ is non-increasing, we have $\lambda_{w(x_{n-1},x_n)}\le\lambda$, and \eqref{eq:contraction} produces the linear recursion
\begin{equation}\label{eq:recursion}
a_{n+1}\ \le\ \lambda a_n+\beta_0\lambda^{\,n-1}\qquad(n\ge1).
\end{equation}

\emph{Step 1: geometric control of consecutive gaps.} Unwinding \eqref{eq:recursion} inductively gives
\begin{equation}\label{eq:gap}
a_n\ \le\ \lambda^{\,n-1}a_1+(n-1)\beta_0\lambda^{\,n-2}\qquad(n\ge1),
\end{equation}
the second summand being read as $0$ for $n=1$. Hypothesis \eqref{eq:hyp} gives $\lambda<1/M$, so we may fix $\lambda'\in(\lambda,1/M)$. Since $(\lambda/\lambda')^{\,n-1}\to0$ and $(n-1)(\lambda/\lambda')^{\,n-2}\to0$ as $n\to\infty$, both terms on the right of \eqref{eq:gap} are $O\big((\lambda')^{\,n-1}\big)$, whence there is $C_1>0$, depending only on $a_1,\beta_0,\lambda,\lambda'$, with
\begin{equation}\label{eq:gap2}
a_n\ \le\ C_1(\lambda')^{\,n-1}\qquad(n\ge1).
\end{equation}

\emph{Step 2: a bound uniform in the chain length.} We claim that
\begin{equation}\label{eq:uniform}
d(x_n,x_{n+p})\ \le\ C(\lambda')^{\,n}\qquad(n\ge0,\ p\ge1),\qquad C:=\frac{MC_1}{1-M\lambda'} .
\end{equation}
Note $C>0$ because $M\lambda'<1$, and $C\ge C_1$ because $M\ge1$ forces $M\ge1-M\lambda'$. We induct on $p$. For $p=1$, estimate \eqref{eq:gap2} gives $d(x_n,x_{n+1})=a_{n+1}\le C_1(\lambda')^{n}\le C(\lambda')^{n}$. Assume \eqref{eq:uniform} holds at chain length $p$ for every starting index. Using \eqref{eq:M} with midpoint $x_{n+1}$, then \eqref{eq:gap2} and the inductive hypothesis,

\[
\begin{aligned}
d(x_n,x_{n+p+1}) &\le M\big[d(x_n,x_{n+1})+d(x_{n+1},x_{n+1+p})\big] \\
&\le M\big[C_1(\lambda')^{n}+C(\lambda')^{n+1}\big] = M(\lambda')^{n}\big(C_1+C\lambda'\big)
\end{aligned}
\]
By the choice of $C$ we have $M(C_1+C\lambda')=MC_1+MC\lambda'=C(1-M\lambda')+MC\lambda'=C$, so $d(x_n,x_{n+p+1})\le C(\lambda')^{n}$. This closes the induction and establishes \eqref{eq:uniform}, hence also \eqref{eq:rate}.

\emph{Step 3: convergence.} By \eqref{eq:uniform}, $d(x_n,x_{n+p})\to0$ as $n\to\infty$ uniformly in $p$; Assumption~\ref{ass:A1} transfers this to $d(x_{n+p},x_n)$, so $\{x_n\}$ is Cauchy in $(X,d_1)$. Completeness of $(X,d_1)$ and closedness of $K$ provide $x^\ast\in K$ with $x_n\to x^\ast$ in $(X,d_1)$, and therefore $d(x_n,x^\ast)\to0$ and $d(x^\ast,x_n)\to0$.

\emph{Step 4: the limit lies in its own image.} Property $(A_\alpha)$ yields a subsequence with $(x_{n_k},x^\ast)\in E_\alpha$ for all $k$, so \eqref{eq:contraction} gives $H(Fx_{n_k},Fx^\ast)\le\lambda\,d(x_{n_k},x^\ast)\to0$. Let $\eta>0$. For $k$ large we have $H(Fx_{n_k},Fx^\ast)<\eta$, and since $x_{n_k+1}\in F(x_{n_k})$, Lemma~\ref{lem:selection} applied with $a=x_{n_k+1}$ and slack $\eta$ supplies $z_k\in F(x^\ast)$ with
\[
d(x_{n_k+1},z_k)\ \le\ H(Fx_{n_k},Fx^\ast)+\eta\ <\ 2\eta .
\]
Since $x_{n_k+1}\to x^\ast$ we also have $d(x^\ast,x_{n_k+1})\to0$; combining a small $d(x^\ast,x_{n_k+1})$ with a small $d(x_{n_k+1},z_k)$ through Assumption~\ref{ass:A2} renders $d(x^\ast,z_k)$ arbitrarily small, and Assumption~\ref{ass:A1} then renders $d(z_k,x^\ast)$ arbitrarily small as well. Hence $z_k\to x^\ast$ in $(X,d_1)$, and as $z_k\in F(x^\ast)$ with $F(x^\ast)$ closed in $(X,d_1)$ we conclude $x^\ast\in F(x^\ast)$.
\end{proof}

\begin{remark}\label{rem:exact}
When the values of $F$ are compact the infimum in \eqref{eq:excess} is attained, the slack $\beta_0$ may be dispensed with, and \eqref{eq:recursion} sharpens to $a_{n+1}\le\lambda_{w(x_{n-1},x_n)}\,a_n\le\lambda_\alpha a_n$, whence
\begin{equation}\label{eq:exactgap}
a_n\ \le\ \lambda_\alpha^{\,n-1}a_1\qquad(n\ge1).
\end{equation}
Estimate \eqref{eq:exactgap} is the single-threshold envelope plotted in Figure~\ref{fig:convergence}; Proposition~\ref{prop:adaptive} sharpens it.
\end{remark}

\begin{figure}[H]
\centering
\caption{}
\label{fig:convergence}
\begin{tikzpicture}
\begin{semilogyaxis}[
  width=0.52\textwidth, height=6.2cm,
  xlabel={iteration $n$}, ylabel={gap $a_n$},
  xmin=0.8, xmax=5.2, ymin=1e-32, ymax=1e1,
  xtick={1,2,3,4,5},
  ytick={1e0,1e-5,1e-10,1e-15,1e-20,1e-25,1e-30},
  legend style={at={(0.03,0.03)},anchor=south west,font=\scriptsize,draw=black!30},
  grid=both, grid style={draw=black!12},
  label style={font=\small}, tick label style={font=\scriptsize}
]
\addplot[black!45,dashed,thick,mark=square,mark size=1.6pt] coordinates
 {(1,1.296e-1)(2,5.4756e-2)(3,2.313441e-2)(4,9.774288e-3)(5,4.129637e-3)};
\addlegendentry{single-thr., $\alpha=0.35$}
\addplot[black,dashed,thick,mark=triangle,mark size=2.2pt] coordinates
 {(1,1.296e-1)(2,6.27264e-3)(3,3.035958e-4)(4,1.469404e-5)(5,7.111913e-7)};
\addlegendentry{single-thr., $\alpha^{\ast}=0.78$}
\addplot[black,thick,mark=o,mark size=2.2pt,densely dotted] coordinates
 {(1,1.296e-1)(2,6.273e-3)(3,2.559e-6)(4,1.024e-13)(5,4.096e-29)};
\addlegendentry{grade-adaptive}
\addplot[black,thick,mark=*,mark size=1.7pt] coordinates
 {(1,1.296e-1)(2,1.56816e-3)(3,1.59968e-7)(4,1.6e-15)(5,1.6e-31)};
\addlegendentry{observed $a_n$}
\end{semilogyaxis}
\end{tikzpicture}
\hfill
\begin{tikzpicture}
\begin{axis}[
  width=0.44\textwidth, height=6.2cm,
  xlabel={iteration $n$}, ylabel={grade $g_n=w(x_{n-1},x_n)$},
  xmin=0.8, xmax=5.2, ymin=0.7, ymax=1.04,
  xtick={1,2,3,4,5},
  grid=both, grid style={draw=black!12},
  label style={font=\small}, tick label style={font=\scriptsize}
]
\addplot[black,thick,mark=*,mark size=1.7pt] coordinates
 {(1,0.7800)(2,0.9798)(3,0.9998)(4,1.0000)(5,1.0000)};
\addplot[black!45,dashed,thick] coordinates {(0.8,0.78)(5.2,0.78)};
\node[font=\scriptsize,anchor=west] at (axis cs:1.15,0.755) {entry grade $\alpha^{\ast}=0.78$};
\end{axis}
\end{tikzpicture}
\end{figure}

\begin{remark}\label{rem:unbounded}
No boundedness of $K$ is required in Theorem~\ref{thm:main}. Only the values $F(x)$ are asked to be bounded, through membership in $CB(K)$; the diameter of $K$ itself never enters the estimates. This matters for the application of Section~\ref{sec:applications}, where $K$ is an unbounded function space.
\end{remark}

\begin{corollary}[termination on finite networks]\label{cor:finite}
If, in addition to the hypotheses of Theorem~\ref{thm:main}, $K$ is finite with at least two points, then the constructed sequence is eventually constant: there is a finite index beyond which $x_{n+1}=x_n=x^\ast$.
\end{corollary}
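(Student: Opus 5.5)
The idea is that on a finite set the positive distances are bounded away from zero, while the construction in the proof of Theorem~\ref{thm:main} drives distances to zero. Since $K$ is finite and has at least two points, the set $\{d(x,y): x,y\in K,\ x\neq y\}$ is a finite set of strictly positive numbers by \eqref{eq:zero}. Let $\mu>0$ be its minimum. I would reuse the sequence $\{x_n\}$ built in the proof of Theorem~\ref{thm:main} together with its limit $x^\ast$, and not construct anything new.

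\emph{Step 1 (consecutive terms coincide).} By \eqref{eq:gap2}, $a_{n+1}=d(x_n,x_{n+1})\le C_1(\lambda')^{n}\to0$. So there is $n_1$ with $d(x_n,x_{n+1})<\mu$ for all $n\ge n_1$. For such $n$ the definition of $\mu$ forces $x_{n+1}=x_n$. Hence the sequence is constant from $n_1$ on, say equal to some $\bar x\in K$.

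\emph{Step 2 (the constant is $x^\ast$ and is a fixed point).} A constant sequence converges in $(X,d_1)$ to its constant value. By uniqueness of limits in the metric space $(X,d_1)$, $\bar x=x^\ast$. Alternatively, $d(x_n,x^\ast)\to0$ together with the gap $\mu$ directly gives $x_n=x^\ast$ for large $n$. Moreover $x_{n+1}\in F(x_n)$ for $n\ge n_1$ reads $x^\ast\in F(x^\ast)$, which is consistent with Theorem~\ref{thm:main}.

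\emph{Main obstacle.} The only delicate point is the degenerate case $\lambda_\alpha=0$, which the proof of Theorem~\ref{thm:main} treats only informally. There the gaps are controlled only by the slack, $a_{n+1}\le\gamma_n$. I would handle it by taking the slack sequence $\gamma_n$ to tend to zero, for instance $\gamma_n=\beta_0 2^{-n}$. Then $a_n\to0$ in this case as well, and Step 1 applies verbatim.

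The uniform bound \eqref{eq:uniform} is not needed here. The decay of consecutive gaps, combined with the discreteness of the finite set $K$, suffices.
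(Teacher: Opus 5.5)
Your argument is correct and is essentially the paper's proof: the minimum $\mu$ of the positive distances on the finite set $K$, combined with $a_n\to0$ from \eqref{eq:gap2}, forces $x_{n+1}=x_n$ eventually, and the common value must be the limit $x^\ast$ from Step~3. Your explicit treatment of the degenerate case $\lambda_\alpha=0$ with a slack sequence $\gamma_n\to0$ tightens the paper's informal remark there, because with the paper's choice $\gamma_n=\beta_0\lambda^{n-1}$ the slack would vanish for $n\ge2$, which Lemma~\ref{lem:selection} does not permit.
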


\begin{proof}
Let $\rho:=\min\{d(x,y):x,y\in K,\ x\ne y\}$, positive by \eqref{eq:zero} and the finiteness of $K$. By \eqref{eq:gap2}, $a_n\to0$, so $a_n<\rho$ for all large $n$, which by definition of $\rho$ forces $x_{n-1}=x_n$ from that index onwards; the common value is the limit $x^\ast$ obtained in Step~3.
\end{proof}

The rate \eqref{eq:rate} of Theorem~\ref{thm:main} is governed by the single constant $\lambda_\alpha$ attached to the chosen threshold, and in that estimate the weighting plays no quantitative role. The next result shows that the weighting is not, however, inert: because grade-compatibility forces the grades met along the orbit to ascend, the contraction factors actually applied decrease, and the resulting bound can be far tighter than \eqref{eq:exactgap}. Here the weights enter the rate directly, through the whole family $\{\lambda_\alpha\}$ evaluated along the orbit rather than at a single point.

\begin{proposition}[grade-adaptive rate]\label{prop:adaptive}
Assume the hypotheses of Theorem~\ref{thm:main}, and suppose in addition that $F$ has compact values, so that the exact selection of Remark~\ref{rem:exact} is available. Let $(x_n)$ be the orbit constructed from $x_0$, write $a_n:=d(x_{n-1},x_n)$ and let
\[
g_k:=w(x_{k-1},x_k)\qquad(k\ge1)
\]
be the grade of its $k$-th edge. Then $g_1\le g_2\le\cdots$, all $g_k\ge\alpha$, and
\begin{equation}\label{eq:adaptive}
a_{n+1}\ \le\ \Big(\prod_{k=1}^{n}\lambda_{g_k}\Big)\,a_1\ \le\ \lambda_\alpha^{\,n}\,a_1\qquad(n\ge0),
\end{equation}
the empty product being $1$. The first inequality is strict relative to the second whenever some $g_k>\alpha$ with $\lambda_{g_k}<\lambda_\alpha$; in particular, if $\lambda_{g_k}=0$ for some $k$, the orbit reaches its limit in finitely many steps.
\end{proposition}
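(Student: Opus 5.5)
The plan is to rerun the construction of Theorem~\ref{thm:main} with exact selections, as in Remark~\ref{rem:exact}, and keep track of the grade of each edge instead of replacing it by the threshold $\alpha$. Compactness of $F(x_n)$ together with lower semicontinuity of $d(x_n,\cdot)$ would give a point of $F(x_n)$ attaining the infimum defining $d(x_n,F(x_n))$. That regularity does not follow from Assumptions~\ref{ass:A1}--\ref{ass:A2} alone, so I would read Remark~\ref{rem:exact} as granting it. The attained point $x_{n+1}$ then satisfies
\[
d(x_n,x_{n+1})\le H(Fx_{n-1},Fx_n)\le H(Fx_{n-1},Fx_n)+\gamma \quad\text{for every }\gamma>0 .
\]
Hence Definition~\ref{def:compatible} applies with any $\gamma$. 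It yields $(x_n,x_{n+1})\in E(G)$ and $g_{n+1}=w(x_n,x_{n+1})\ge w(x_{n-1},x_n)=g_n$. Starting from $g_1=w(x_0,x_1)\ge\alpha$, induction gives $\alpha\le g_1\le g_2\le\cdots$.

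Next, \eqref{eq:contraction} applied to the edge $(x_{n-1},x_n)$ gives the one-step bound
\[
a_{n+1}=d(x_n,x_{n+1})\le H(Fx_{n-1},Fx_n)\le\lambda_{g_n}a_n .
\]
Iterating from $a_1$ yields $a_{n+1}\le\big(\prod_{k=1}^{n}\lambda_{g_k}\big)a_1$, where the case $n=0$ is the trivial identity for $a_1$. Since each $g_k\ge\alpha$ and $\{\lambda_\alpha\}$ is non-increasing, each factor satisfies $\lambda_{g_k}\le\lambda_\alpha$. This gives the second inequality in \eqref{eq:adaptive}.

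For the strictness clause, I would compare the two products factor by factor. All factors are non-negative, and one of them, $\lambda_{g_k}$, is strictly less than $\lambda_\alpha$. So whenever $n\ge k$, $a_1>0$ and $\lambda_\alpha>0$, the bound $\prod_k\lambda_{g_k}a_1$ is strictly smaller than $\lambda_\alpha^n a_1$. The degenerate cases $a_1=0$ or $\lambda_\alpha=0$ make both sides zero, so I would state the strictness under that nondegeneracy.

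If $\lambda_{g_k}=0$, then $a_{k+1}=0$, so $x_{k+1}=x_k$ by \eqref{eq:zero}. I would then show the orbit is frozen from that point. Monotonicity gives $g_j\ge g_k$ for all $j\ge k$, hence $\lambda_{g_j}\le\lambda_{g_k}=0$, and therefore $a_{j+1}=0$ for all $j\ge k$. The sequence is thus constant from index $k$, and its limit from Step~3 of Theorem~\ref{thm:main} equals $x_k$. That limit lies in $F(x_k)$ by the theorem, which here is directly visible since $x_{k+1}=x_k\in F(x_k)$.

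The only delicate step is justifying the exact selection and its compatibility with Definition~\ref{def:compatible}. I would handle it by relying on Remark~\ref{rem:exact} as stated; everything else is an elementary product estimate.
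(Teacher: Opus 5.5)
Your proposal is correct and follows the paper's own argument. Exact selection feeds grade-compatibility to give $\alpha\le g_1\le g_2\le\cdots$, the edge estimate \eqref{eq:contraction} gives $a_{k+1}\le\lambda_{g_k}a_k$, iteration together with $\lambda_{g_k}\le\lambda_\alpha$ yields both inequalities, and a vanishing factor freezes the orbit. Your extra care is well placed and goes slightly beyond the paper's proof. Attainment of the infimum on compact values does need lower semicontinuity of $d(x_n,\cdot)$, which does not follow from \eqref{eq:M} when $M>1$, so treating it as part of the hypothesis is right. The strictness clause, which the paper's proof does not address, indeed needs $n\ge k$ and $a_1>0$; the condition $\lambda_\alpha>0$ is automatic from $\lambda_{g_k}<\lambda_\alpha$. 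Your monotonicity argument also supplies the justification for ``thereafter constant'' that the paper only asserts.
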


\begin{proof}
That the grades are non-decreasing and bounded below by $\alpha$ was shown in the proof of Theorem~\ref{thm:main}. With compact values, exact selection gives $a_{k+1}=d(x_k,x_{k+1})\le H(Fx_{k-1},Fx_k)$, and \eqref{eq:contraction} bounds the right-hand side by $\lambda_{w(x_{k-1},x_k)}\,d(x_{k-1},x_k)=\lambda_{g_k}a_k$. Iterating $a_{k+1}\le\lambda_{g_k}a_k$ from $k=1$ to $n$ yields the first inequality in \eqref{eq:adaptive}. Since each $g_k\ge\alpha$ and $\{\lambda_\alpha\}$ is non-increasing, $\lambda_{g_k}\le\lambda_\alpha$, giving the second. If some $\lambda_{g_k}=0$ then $a_{k+1}=0$, so $x_k=x_{k+1}$ is the limit and the orbit is thereafter constant.
\end{proof}

\begin{remark}\label{rem:adaptive-role}
Proposition~\ref{prop:adaptive} is the precise sense in which the graph's weights carry quantitative information about convergence, not merely about which edges are admissible. The plain rate \eqref{eq:exactgap} sees only the entry grade $\alpha$; the adaptive rate \eqref{eq:adaptive} sees the whole ascending profile $g_1\le g_2\le\cdots$ and is strictly sharper as soon as the orbit climbs. Example~\ref{ex:interval} and the finite network of Section~\ref{sec:applications} exhibit orbits whose grades rise to $1$, where \eqref{eq:adaptive} outruns \eqref{eq:exactgap} by many orders of magnitude and pinpoints exact termination that \eqref{eq:exactgap} cannot detect.
\end{remark}

The parameter $\alpha$ in Theorem~\ref{thm:main} is at the disposal of the user, and the two quantities it governs move in opposite directions. Raising $\alpha$ lowers $\lambda_\alpha$ and hence sharpens the guaranteed rate \eqref{eq:rate}, but it simultaneously deletes edges and therefore shrinks the supply of admissible initial points. We make this precise. For $\alpha\in(0,1]$ set
\begin{equation}\label{eq:XFalpha}
X_F^{\alpha}\ :=\ \{x\in K:\ (x,u)\in E_\alpha\ \text{for some}\ u\in F(x)\},
\end{equation}
the set of points from which the construction of Theorem~\ref{thm:main} can be launched at threshold $\alpha$, and let
\begin{equation}\label{eq:Ax0}
\mathcal A(x_0)\ :=\ \{\alpha\in(0,1]:\ M\lambda_\alpha<1,\ (G,w)\ \text{has property}\ (A_\alpha)\ \text{on}\ K,\ x_0\in X_F^{\alpha}\}.
\end{equation}

\begin{proposition}\label{prop:tradeoff}
Let $F$ be a graded set-valued contraction with respect to $(G,w)$ and $\{\lambda_\alpha\}$. Then:
\begin{enumerate}[label=\textnormal{(\roman*)}]
\item the map $\alpha\mapsto X_F^{\alpha}$ is non-increasing: $\alpha\le\beta$ implies $X_F^{\beta}\subseteq X_F^{\alpha}$;
\item the map $\alpha\mapsto\lambda_\alpha$ is non-increasing, so the rate guaranteed by \eqref{eq:rate} improves as $\alpha$ grows;
\item for every $x_0\in K$ with $\mathcal A(x_0)\ne\emptyset$ and every $\alpha\in\mathcal A(x_0)$, the conclusion of Theorem~\ref{thm:main} holds from $x_0$ with rate governed by $\lambda_\alpha$; consequently the sharpest single-threshold guarantee available at $x_0$ is obtained at
\[
\alpha^{\ast}(x_0)\ :=\ \sup\mathcal A(x_0),
\]
whenever this supremum belongs to $\mathcal A(x_0)$.
\end{enumerate}
\end{proposition}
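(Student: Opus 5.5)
The three parts are essentially direct consequences of the definitions and of Theorem~\ref{thm:main}, so I will check them one at a time.

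For (i), fix $\alpha\le\beta$ and $x\in X_F^{\beta}$. By \eqref{eq:XFalpha} there is $u\in F(x)$ with $(x,u)\in E_\beta$. The level cuts are nested: $\beta\ge\alpha$ gives $E_\beta\subseteq E_\alpha$ by \eqref{eq:levelcut}, since $w(e)\ge\beta\ge\alpha$. Hence $(x,u)\in E_\alpha$, so $x\in X_F^{\alpha}$.

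Part (ii) is the standing monotonicity hypothesis on $\{\lambda_\alpha\}$ in Definition~\ref{def:contraction}. To justify ``the rate improves'', I will note that \eqref{eq:rate} holds for every $\lambda'\in(\lambda_\alpha,1/M)$. If $\alpha\le\beta$, then $\lambda_\beta\le\lambda_\alpha$, so the interval $(\lambda_\beta,1/M)$ contains $(\lambda_\alpha,1/M)$. The admissible exponents at threshold $\beta$ therefore include every exponent admissible at $\alpha$, and possibly smaller ones. In the compact-valued case, the envelope \eqref{eq:exactgap} likewise decreases pointwise in $\alpha$.

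For (iii), take $\alpha\in\mathcal A(x_0)$. By \eqref{eq:Ax0} this means three things:
\begin{itemize}
\item $M\lambda_\alpha<1$, which is \eqref{eq:hyp};
\item property $(A_\alpha)$ holds on $K$;
\item $x_0\in X_F^{\alpha}$, so some $x_1\in F(x_0)$ has $(x_0,x_1)\in E_\alpha$.
\end{itemize}
These are exactly the hypotheses of Theorem~\ref{thm:main}, which then produces $x^\ast\in F(x^\ast)$ together with \eqref{eq:rate} governed by $\lambda_\alpha$.

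For the optimality claim, suppose $\alpha^\ast:=\sup\mathcal A(x_0)\in\mathcal A(x_0)$. Then $\alpha\le\alpha^\ast$ for every $\alpha\in\mathcal A(x_0)$, and by (ii) $\lambda_{\alpha^\ast}\le\lambda_\alpha$. So the guarantee at $\alpha^\ast$ is at least as sharp as at any other admissible threshold. The theorem also applies at $\alpha^\ast$ itself, because $\alpha^\ast$ is assumed to be admissible.

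The only step needing care is making precise what ``sharpest guarantee'' means. I would phrase it as comparing the sets of admissible exponents $\lambda'$, or equivalently the quantity $\inf\{\lambda'\}=\lambda_\alpha$, rather than the constants $C$. The constant $C$ depends on $a_1=d(x_0,x_1)$, and $x_1$ may change with $\alpha$. I would state the comparison at the level of the geometric ratio, and note that the proviso $\alpha^\ast\in\mathcal A(x_0)$ is needed because a supremum of admissible thresholds need not itself be admissible. With that clarification, the remaining steps are routine.
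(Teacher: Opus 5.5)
Your proposal is correct and follows the paper's proof essentially step for step: nestedness $E_\beta\subseteq E_\alpha$ for (i), the monotonicity postulated in Definition~\ref{def:contraction} together with the widening interval $(\lambda_\alpha,1/M)$ for (ii), and unpacking $\mathcal A(x_0)$ into exactly the hypotheses of Theorem~\ref{thm:main} for (iii). Your added caveat is a legitimate sharpening of a point the paper leaves implicit: the comparison should be made through the ratio $\lambda_\alpha$ rather than the constant $C$, since $C$ depends on $d(x_0,x_1)$ and the admissible $x_1$ may change with $\alpha$.
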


\begin{proof}
(i) If $\alpha\le\beta$ then $E_\beta\subseteq E_\alpha$ by \eqref{eq:levelcut}, so any $x$ possessing $u\in F(x)$ with $(x,u)\in E_\beta$ also has $(x,u)\in E_\alpha$. (ii) is the monotonicity postulated in Definition~\ref{def:contraction}, and the rate \eqref{eq:rate} is admissible for every $\lambda'\in(\lambda_\alpha,1/M)$, an interval which widens downwards as $\lambda_\alpha$ decreases. (iii) Each $\alpha\in\mathcal A(x_0)$ satisfies by construction all the hypotheses of Theorem~\ref{thm:main} together with the existence of a starting edge at $x_0$, so the theorem applies at that threshold; among these the smallest $\lambda_\alpha$, hence by (ii) the largest admissible $\alpha$, gives the sharpest bound.
\end{proof}

Proposition~\ref{prop:tradeoff} governs the choice of entry threshold, and Proposition~\ref{prop:adaptive} then improves upon whatever that choice guarantees, since once the orbit is launched its grades can only rise. Passing to an ungraded model amounts to fixing once and for all the smallest grade occurring in the graph, which by (i)--(ii) maximizes the supply of starting points while simultaneously choosing the worst available contraction factor and discarding the adaptive refinement entirely. Example~\ref{ex:interval} and the finite network of Section~\ref{sec:applications} exhibit data for which this ungraded choice is strictly and substantially inferior, and Figure~\ref{fig:tradeoff} displays the competition graphically.

\begin{figure}[H]
\centering
\caption{}
\label{fig:tradeoff}
\begin{tikzpicture}
\begin{axis}[
  width=0.62\textwidth, height=6.2cm,
  xlabel={threshold $\alpha$},
  xmin=0.29, xmax=1.0, ymin=0, ymax=0.9,
  legend style={at={(0.98,0.98)},anchor=north east,font=\scriptsize,draw=black!30},
  grid=both, grid style={draw=black!12},
  label style={font=\small}, tick label style={font=\scriptsize}
]
\addplot[black,thick] coordinates
 {(0.2929,0.5000)(0.35,0.4225)(0.45,0.3025)(0.52,0.2304)(0.60,0.1600)(0.70,0.0900)(0.78,0.0484)(0.85,0.0225)(0.90,0.0100)(0.95,0.0025)(1.0,0.0)};
\addlegendentry{contraction factor $\lambda_\alpha=(1-\alpha)^2$}
\addplot[black,dashed,thick] coordinates
 {(0.2929,0.8000)(0.35,0.8000)(0.45,0.8000)(0.52,0.8000)(0.60,0.6833)(0.70,0.5298)(0.78,0.4000)(0.85,0.2804)(0.90,0.1909)(0.95,0.0976)(1.0,0.0)};
\addlegendentry{largest admissible initial point $\sup X_F^{\alpha}$}
\addplot[black!55,dotted,thick] coordinates {(0.78,0)(0.78,0.9)};
\node[font=\scriptsize,anchor=west,rotate=90] at (axis cs:0.788,0.30) {$\alpha^{\ast}(0.4)=0.78$};
\end{axis}
\end{tikzpicture}
\end{figure}

Limit point is not unique. This is not through any degeneracy of the hypotheses: the contractive condition constrains the distance between two \emph{images} and says nothing about the size of an individual image.

\begin{proposition}\label{prop:nonunique}
There exist a generalized metric space satisfying \eqref{eq:M}, a weighted graph $(G,w)$ whose level cut at the relevant threshold is connected, and a non-constant graded set-valued contraction $F$ satisfying \eqref{eq:hyp} with $\lambda_\alpha>0$, whose fixed point set $\Fix F:=\{x: x\in F(x)\}$ contains two distinct points.
\end{proposition}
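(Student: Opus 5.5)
The proof will be a concrete construction, with every hypothesis checked directly. I will take $X=K=[0,1]$ with the usual metric $d(x,y)=|x-y|$. Then \eqref{eq:M} holds with $M=1$, Assumption~\ref{ass:A1} holds with $\delta_1(\varepsilon)=\varepsilon$, and $d_1=d$ is complete. The graph $G$ will be complete: every pair, loops included, is an edge, each graded $1$. Then $E_\alpha=X\times X$ for every $\alpha$, so every level cut is connected with $N(\alpha)=1$, and property $(A_\alpha)$ holds trivially. I will fix $\lambda_\alpha:=1/2$ for all $\alpha$, so that \eqref{eq:hyp} reads $M\lambda_\alpha=1/2<1$ and $\lambda_\alpha>0$.

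For the map I propose $F(x):=[0,\tfrac12+\tfrac x2]$. It is non-constant, and its values are compact intervals in $CB(K)$. Its fixed points are all $x$ with $x\le \tfrac12+\tfrac x2$, i.e.\ every $x\in[0,1]$. In particular $0$ and $1$ are two distinct points of $\Fix F$.

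\emph{Contraction estimate.} Both images contain $[0,\tfrac12]$, and $F(x)\subseteq F(y)$ whenever $x\le y$. Hence $H(Fx,Fy)=0$ when $x\le y$. When $x>y$, the excess is attained at the right endpoint, giving $H(Fx,Fy)=\tfrac{x-y}{2}$. In both cases $H(Fx,Fy)\le\tfrac12 d(x,y)$, which is \eqref{eq:contraction}. The case $x\le y$ is exactly the phenomenon flagged before Lemma~\ref{lem:selection}: an edge imposes no constraint once $Fx$ reaches into $Fy$.

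\emph{Grade-compatibility.} Every pair is an edge of grade $1$, so $(y,v)\in E(G)$ and $w(y,v)=1\ge w(x,y)$ hold for any $v$. Definition~\ref{def:compatible} is therefore automatic.

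The only step needing care is computing the one-sided excess $H$ correctly in the two orderings; everything else is verification. If one wants $d$ genuinely asymmetric, or a graph with nontrivial grades, I would modify the construction as follows. Take $d(x,y)=|x-y|$ for $x\le y$ and $d(x,y)=2|x-y|$ otherwise; this satisfies \eqref{eq:M} with $M=2$ and Assumption~\ref{ass:A1} with $\delta_1(\varepsilon)=\varepsilon/2$. I would then shrink $\lambda_\alpha$ to $1/8$ and use $F(x)=[0,\tfrac12+\tfrac x8]$. Since \eqref{eq:zero} and completeness of the symmetrized uniformity are unchanged, the same verification goes through, and the fixed point set $[0,\tfrac47]$ still contains two distinct points.
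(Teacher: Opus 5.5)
Your construction is correct, and it differs from the paper's in where the non-uniqueness comes from.

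The paper uses the two-point set $K=\{0,2\}$ with the loops and a single directed edge $(0,2)$, and $F(0)=\{0\}$, $F(2)=\{0,2\}$. There the contraction holds only because the excess is one-sided and the reverse edge is absent: $H(F2,F0)=2=d(2,0)$, so including $(2,0)$ would force $\lambda\ge1$. The paper's example is therefore minimal and specifically showcases the directed, one-sided features of the framework.

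Your map $F(x)=[0,\tfrac12+\tfrac x2]$ lives on the complete graph with constant grade. It satisfies even the symmetric bound $\max\{H(Fx,Fy),H(Fy,Fx)\}=\tfrac12|x-y|$. So it shows that non-uniqueness is already present in the classical Nadler setting and is not an artifact of the graph or of the one-sided excess. It also yields a whole interval of fixed points rather than two.

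Your choice has a further concrete advantage. Grade-compatibility is genuinely automatic under the quantifier ``for every $\gamma>0$'' in Definition~\ref{def:compatible}. The paper's example checks the loop $(2,2)$ only for slack $\gamma<2$. For $\gamma\ge2$ the point $v=0\in F(2)$ satisfies $d(2,0)\le H(F2,F2)+\gamma$ while $(2,0)\notin E(G)$. So the paper's verification is incomplete as the definition is literally written, and your complete-graph construction avoids this issue entirely.

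Your asymmetric variant also checks out. For $x>y$ every $b\in F(y)$ lies below the relevant points of $F(x)$, so the excess is $2\cdot\tfrac{x-y}{8}=\tfrac14(x-y)=\tfrac18\,d(x,y)$, with the bound attained with equality. Note that the ``nontrivial grades'' you mention are never actually introduced there, though the statement does not require them.
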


\begin{proof}
Let $K=\{0,2\}\subseteq\R$ with $d(x,y)=|x-y|$, so \eqref{eq:M} holds with $M=1$. Let $E(G)$ consist of the two loops together with the single directed edge $(0,2)$, let $w\equiv1$, and set $\lambda_1:=\tfrac12$. Define
\[
F(0):=\{0\},\qquad F(2):=\{0,2\},
\]
so $F$ is not constant. On the loops, $H(F0,F0)=H(F2,F2)=0=\lambda_1\cdot 0$. On the edge $(0,2)$,
\[
H(F0,F2)=\inf_{v\in\{0,2\}}d(0,v)=0\ \le\ \tfrac12\cdot 2=\lambda_1 d(0,2),
\]
so \eqref{eq:contraction} holds. Grade-compatibility is verified on the two configurations to which it applies: for $(0,0)$ with $0\in F(0)$ the only admissible selection is $v=0$ and $(0,0)\in E(G)$; for $(2,2)$ with $2\in F(2)$ and slack $\gamma<2$ the only admissible selection is $v=2$ and $(2,2)\in E(G)$; in both cases the grade is preserved. The configuration $(0,2)$ is vacuous since $2\notin F(0)$. Here $E_1=E(G)$, whose symmetrization joins $0$ and $2$, so $N(1)=1$ and property $(A_1)$ holds trivially on the finite discrete set $K$. Finally $M\lambda_1=\tfrac12<1$. Since $0\in F(0)$ and $2\in F(2)$, we obtain $\Fix F=\{0,2\}$.
\end{proof}

\begin{example}[a finite weighted network]\label{ex:finite}
Let $X=\R$ with $d(x,y)=|x-y|$, so \eqref{eq:M} holds with $M=1$, and let $K=\{0,1,3,9\}$. Define $F:K\to CB(K)$ by
\[
F(9)=\{1,3\},\qquad F(3)=\{0,1\},\qquad F(1)=\{0\},\qquad F(0)=\{0\},
\]
and let $E(G)$ consist of the loops together with the pairs $\{9,3\}$, $\{3,1\}$, $\{1,0\}$ and their reversals, with grades
\[
w(9,3)=w(3,9)=\tfrac12,\qquad w(3,1)=w(1,3)=\tfrac12,\qquad w(1,0)=w(0,1)=1,
\]
every loop being graded $1$. Take $\lambda_{1/2}=\tfrac12$ and $\lambda_1=0$, an admissible family since $\lambda_1\le\lambda_{1/2}$. The weighted graph and the action of $F$ are displayed in Figure~\ref{fig:finitegraph}, in which solid arrows are graph edges carrying the indicated grades and dashed arrows show the images $F(x)$; panels (b) and (c) of that figure display the level cuts, at $\alpha=\tfrac12$ the cut being the connected path of diameter $3$, whereas at $\alpha=1$ only the strongest edge survives and the cut is disconnected, so $1\notin A$ and the threshold $\tfrac12$ is the operative one. Table~\ref{tab:example} verifies the contractive estimate \eqref{eq:contraction} on every non-loop edge of this example.

\begin{figure}[H]
\centering
\caption{}
\label{fig:finitegraph}
\begin{tikzpicture}[
  every node/.style={circle,draw,inner sep=1.4pt,minimum size=7.5mm,font=\small},
  edg/.style={-{Stealth[length=2mm]},thick},
  gone/.style={-{Stealth[length=2mm]},draw=black!25,thick},
  img/.style={-{Stealth[length=2mm]},dashed,draw=black!60},
  lab/.style={draw=none,circle=false,inner sep=1pt,font=\scriptsize,fill=white}
]
\begin{scope}
\node (n9) at (0,0) {$9$};
\node (n3) at (1.9,0) {$3$};
\node (n1) at (3.8,0) {$1$};
\node (n0) at (5.7,0) {$0$};
\draw[edg] (n9)--(n3) node[lab,midway,above]{$\tfrac12$};
\draw[edg] (n3)--(n1) node[lab,midway,above]{$\tfrac12$};
\draw[edg] (n1)--(n0) node[lab,midway,above]{$1$};
\draw[img] (n9) to[bend right=38] (n1);
\draw[img] (n9) to[bend right=25] (n3);
\draw[img] (n3) to[bend right=32] (n1);
\draw[img] (n3) to[bend right=42] (n0);
\draw[img] (n1) to[bend right=28] (n0);
\node[draw=none,font=\small] at (2.85,-1.85) {(a) $(G,w)$ with the action of $F$ (dashed)};
\end{scope}
\begin{scope}[yshift=-3.3cm]
\node (m9) at (0,0) {$9$};
\node (m3) at (1.9,0) {$3$};
\node (m1) at (3.8,0) {$1$};
\node (m0) at (5.7,0) {$0$};
\draw[edg] (m9)--(m3);
\draw[edg] (m3)--(m1);
\draw[edg] (m1)--(m0);
\node[draw=none,font=\small] at (2.85,-1.0) {(b) $G_\alpha$, $\alpha=\tfrac12$:\, $N(\tfrac12)=3$};
\end{scope}
\begin{scope}[yshift=-3.3cm,xshift=7.6cm]
\node (p9) at (0,0) {$9$};
\node (p3) at (1.9,0) {$3$};
\node (p1) at (3.8,0) {$1$};
\node (p0) at (5.7,0) {$0$};
\draw[gone] (p9)--(p3);
\draw[gone] (p3)--(p1);
\draw[edg] (p1)--(p0);
\node[draw=none,font=\small] at (2.85,-1.0) {(c) $G_\alpha$, $\alpha=1$:\, $N(1)=\infty$};
\end{scope}
\end{tikzpicture}
\end{figure}

\begin{table}[H]
\centering
\caption{Verification of the contractive estimate}
\label{tab:example}
\begin{tabular}{c c c c c}
\hline
edge $(x,y)$ & $F(x),\,F(y)$ & $H(Fx,Fy)$ & $d(x,y)$ & bound $\lambda_{w(x,y)}\,d(x,y)$\\
\hline
$(9,3)$ & $\{1,3\},\,\{0,1\}$ & $2$ & $6$ & $\tfrac12\cdot6=3$\\
$(3,1)$ & $\{0,1\},\,\{0\}$ & $1$ & $2$ & $\tfrac12\cdot2=1$\\
$(1,0)$ & $\{0\},\,\{0\}$ & $0$ & $1$ & $0\cdot1=0$\\
\hline
\end{tabular}
\end{table}

Thus $F$ is a graded set-valued contraction. Grade-compatibility must be checked on every edge $(x,y)$ with $y\in F(x)$, not merely along one trajectory; the configurations that are not vacuous are $(9,3)$, $(3,1)$, $(1,0)$ and the loop $(0,0)$, and on each the admissible successor lies on an edge of grade at least that of the incoming edge, the grades $\tfrac12,\tfrac12,1,1$ being non-decreasing. The level cut $\widetilde G_{1/2}$ is the connected path $9-3-1-0$, so $N(\tfrac12)=3$ and $\tfrac12\in A$, while $\widetilde G_{1}$ is disconnected; property $(A_{1/2})$ holds trivially on the finite discrete set $K$, and $M\lambda_{1/2}=\tfrac12<1$. Starting from $x_0=9$ with $x_1=3\in F(9)$ and $(9,3)\in E_{1/2}$, every image above is attained exactly, so the selections are forced and
\[
9\ \longrightarrow\ 3\ \longrightarrow\ 1\ \longrightarrow\ 0\ \longrightarrow\ 0\ \longrightarrow\cdots,
\]
reaching the fixed point $x^\ast=0\in F(0)$ in three steps. The orbit grades are $g_1=\tfrac12,g_2=\tfrac12,g_3=1$, so the grade-adaptive bound \eqref{eq:adaptive} gives $a_4\le\lambda_{1/2}\lambda_{1/2}\lambda_1\,a_1=0$, correctly predicting termination at the third step, whereas the single-threshold bound \eqref{eq:exactgap} yields only $a_4\le\lambda_{1/2}^{3}a_1=\tfrac34$.
\end{example}

\begin{example}[an interval, with an interior optimal threshold]\label{ex:interval}
Let $X=\R$, $d(x,y)=|x-y|^{2}$ and $K=[0,\tfrac45]$. By the convexity of $r\mapsto r^{2}$ we have $|x-z|^{2}\le2\big(|x-y|^{2}+|y-z|^{2}\big)$, so \eqref{eq:M} holds with
\[
M=2 ,
\]
and $(X,d)$ is a genuine $b$-metric space, not an ordinary metric space; the induced uniformity is the Euclidean one, so $d_1$ may be taken to be the Euclidean metric and $(X,d_1)$ is complete, with $K$ closed. Define
\[
F(x):=\Big[\tfrac{x^{2}}{4},\ \tfrac{x^{2}}{2}\Big]\qquad(x\in K),
\]
a nonempty compact subinterval of $[0,\tfrac{8}{25}]\subseteq K$, so $F:K\to CB(K)$. Grade the complete graph on $K$ by
\[
w(x,y):=1-\tfrac{x+y}{2}\ \in\ \big[\tfrac15,1\big],
\]
so that by \eqref{eq:levelcut}
\[
E_\alpha=\{(x,y)\in K\times K:\ x+y\le2(1-\alpha)\}.
\]
For $x>y$ the excess is computed from the right endpoints,
\[
\sup_{u\in F(x)}\inf_{v\in F(y)}|u-v|=\tfrac{x^{2}-y^{2}}{2}=\tfrac{x+y}{2}\,|x-y| ,
\]
whence, squaring,
\[
H(Fx,Fy)=\Big(\tfrac{x+y}{2}\Big)^{2}d(x,y)\ \le\ (1-\alpha)^{2}\,d(x,y)\qquad\text{for }(x,y)\in E_\alpha .
\]
Therefore \eqref{eq:contraction} holds with
\[
\lambda_\alpha:=(1-\alpha)^{2},
\]
a family decreasing in $\alpha$ as required. Grade-compatibility holds because $y\in F(x)$ forces $y\le x^{2}/2\le x/2$ and then $v\in F(y)$ forces $v\le y^{2}/2\le y/2\le x/4<x$, so $w(y,v)=1-\tfrac{y+v}{2}\ge1-\tfrac{x+y}{2}=w(x,y)$. Condition \eqref{eq:hyp} reads $2(1-\alpha)^{2}<1$, that is
\[
\alpha\ >\ 1-\tfrac{1}{\sqrt2}\ \approx\ 0.2929 .
\]
Choosing at each stage the left endpoint $x_{n+1}=x_n^{2}/4\in F(x_n)$ gives $(x_n,x_{n+1})\in E_\alpha$ precisely when $x_n+x_n^{2}/4\le2(1-\alpha)$, so by \eqref{eq:XFalpha}
\[
X_F^{\alpha}=\big[0,\ \min\{\tfrac45,\ 2\sqrt{3-2\alpha}-2\}\big],
\]

which shrinks as $\alpha$ grows, while $\lambda_\alpha=(1-\alpha)^{2}$ falls. Figure~\ref{fig:tradeoff} plots the two competing quantities over the admissible range $\alpha>1-1/\sqrt2$: raising $\alpha$ drives the contraction factor down, sharpening the rate, while the supply of admissible initial points shrinks. For the initial point $x_0=0.4$ we have $x_1=0.04$ and $x_0+x_1=0.44$, so $\alpha^{\ast}(x_0)=1-0.22=0.78$ by Proposition~\ref{prop:tradeoff}, giving $\lambda_{0.78}=0.0484$ and $M\lambda_{0.78}=0.0968$; the two constraints thus balance at an interior value, neither extreme of the admissible range being optimal. The ungraded choice, which must accept the weakest grade occurring along the orbit, corresponds to a threshold near $0.35$, where $\lambda_{0.35}=0.4225$ and $M\lambda_{0.35}=0.845$: admissible, but very much weaker.

\begin{table}[H]
\centering
\small
\caption{Observed gaps and envelopes along the orbit}
\label{tab:convergence}
\begin{tabular}{c c c c c c}
\hline
$n$ & $a_n$ & single-thr.\ $\alpha=0.78$ & single-thr.\ $\alpha=0.35$ & grade-adaptive \eqref{eq:adaptive} & grade $g_n$\\
\hline
$1$ & $1.296\cdot10^{-1}$ & $1.296\cdot10^{-1}$ & $1.296\cdot10^{-1}$ & $1.296\cdot10^{-1}$ & $0.7800$\\
$2$ & $1.568\cdot10^{-3}$ & $6.273\cdot10^{-3}$ & $5.476\cdot10^{-2}$ & $6.273\cdot10^{-3}$ & $0.9798$\\
$3$ & $1.600\cdot10^{-7}$ & $3.036\cdot10^{-4}$ & $2.313\cdot10^{-2}$ & $2.559\cdot10^{-6}$ & $0.9998$\\
$4$ & $1.600\cdot10^{-15}$ & $1.469\cdot10^{-5}$ & $9.774\cdot10^{-3}$ & $1.024\cdot10^{-13}$ & $1.0000$\\
$5$ & $1.600\cdot10^{-31}$ & $7.112\cdot10^{-7}$ & $4.130\cdot10^{-3}$ & $4.096\cdot10^{-29}$ & $1.0000$\\
\hline
\end{tabular}
\end{table}

Table~\ref{tab:convergence} lists the orbit generated from $x_0=0.4$ by the selection $x_{n+1}=x_n^{2}/4$, the successive gaps $a_n=d(x_{n-1},x_n)$, the single-threshold envelopes $\lambda_\alpha^{\,n-1}a_1$ of \eqref{eq:exactgap} at both thresholds, the grade-adaptive envelope $\big(\prod_{k<n}\lambda_{g_k}\big)a_1$ of \eqref{eq:adaptive}, and the grade $w(x_{n-1},x_n)$ realized along the orbit. The grades rise to $1$, so by Proposition~\ref{prop:adaptive} the adaptive envelope collapses far faster than either single-threshold envelope: at the fifth step it stands at $4.1\cdot10^{-29}$ against $7.1\cdot10^{-7}$ for the optimal single threshold, tracking the true gap $1.6\cdot10^{-31}$ to within a couple of orders of magnitude. Figure~\ref{fig:convergence} displays the same data logarithmically, together with the ascent of the grade along the orbit that drives the adaptive gain.

\end{example}

\section{Applications}\label{sec:applications}

The applications exercise the two novel ingredients separately: the set-valued contraction principle and the integral inclusion below put the asymmetric, relaxed-triangle metric to work with the graph switched off (complete graph), while the finite weighted network that follows puts the weighted graph to work, on a genuinely partial directed network where the grade-adaptive rate and the invariant $N(\alpha)$ are computed explicitly.

Taking the graph complete recovers and extends the classical statement.

\begin{proposition}\label{prop:complete}
Let $(X,d)$ be a generalized metric space satisfying \eqref{eq:M} with $(X,d_1)$ complete, and let $K\subseteq X$ be nonempty and closed in $(X,d_1)$. Let $F:K\to CB(K)$ satisfy
\[
H(Fx,Fy)\ \le\ \lambda\,d(x,y)\qquad(x,y\in K)
\]
for a constant $\lambda\in[0,1)$ with $M\lambda<1$. Then $F$ has a fixed point in $K$.
\end{proposition}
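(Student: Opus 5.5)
We obtain Proposition~\ref{prop:complete} as a specialization of Theorem~\ref{thm:main}, taking the complete graph with constant grade. Concretely, let $E(G):=K\times K$ together with all loops of $X$, i.e.\ $E(G)=\{(x,y)\in X\times X: x,y\in K\}\cup\{(x,x):x\in X\}$, and set $w\equiv1$. Define $\lambda_\beta:=\lambda$ for every $\beta\in(0,1]$. This constant family is trivially non-increasing in $\beta$. The threshold is $\alpha:=1$, so $E_1=E(G)$ and $\lambda_1=\lambda$, and hypothesis \eqref{eq:hyp} is exactly the assumption $M\lambda<1$.

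Next we check that $F$ is a graded set-valued contraction in the sense of Definition~\ref{def:contraction}. The contractive estimate \eqref{eq:contraction} is the assumed inequality $H(Fx,Fy)\le\lambda d(x,y)$, since every edge has grade $1$. Grade-compatibility (Definition~\ref{def:compatible}) is immediate. Any $y\in F(x)\subseteq K$ and any $v\in F(y)\subseteq K$ give $(y,v)\in K\times K\subseteq E(G)$, and $w(y,v)=1=w(x,y)$. Property $(A_1)$ of Definition~\ref{def:propertyA} holds for the same reason. If $x_n\in K$ and $x_n\to x$ in $(X,d_1)$, closedness of $K$ gives $x\in K$, so every pair $(x_n,x)$ lies in $E_1$ and the full sequence serves as the required subsequence.

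It remains to produce a starting edge. Since $K\ne\emptyset$, pick any $x_0\in K$. Because $F(x_0)\in CB(K)$ is nonempty, we may pick $x_1\in F(x_0)$, and then $(x_0,x_1)\in K\times K=E_1$. Theorem~\ref{thm:main} now yields $x^\ast\in K$ with $x^\ast\in F(x^\ast)$, together with the rate \eqref{eq:rate}.

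There is no genuine obstacle here. The only point needing care is that the graph must have every loop of $X$ and not merely those on $K$. This is handled by including all loops of $X$ in $E(G)$ as above; they play no role, since the hypotheses only quantify over $x,y\in K$.
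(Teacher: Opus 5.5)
Your proposal is correct and is essentially the paper's proof: you specialize Theorem~\ref{thm:main} to a complete graph with constant grade $w\equiv1$ at threshold $\alpha=1$, then check grade-compatibility, property $(A_1)$, and the existence of a starting edge $(x_0,x_1)$ with $x_1\in F(x_0)$. The differences are cosmetic: the paper takes $E(G)=X\times X$, which makes the loop requirement automatic, and sets only $\lambda_1:=\lambda$, whereas you use $(K\times K)\cup\{(x,x):x\in X\}$ and specify the whole family as $\lambda_\beta\equiv\lambda$, which is if anything slightly more careful.
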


\begin{proof}
Let $G$ be the complete graph on $X$, loops included, graded $w\equiv1$, and put $\lambda_1:=\lambda$. Then $E_1=X\times X$, so $N(1)=1$ and $1\in A$; grade-compatibility is automatic because every pair is an edge of grade $1$; and property $(A_1)$ holds since $(x_n,x)\in E_1$ for every $n$ and every limit. Theorem~\ref{thm:main} applies with $\alpha=1$, started at any $x_0\in K$ with a selected $x_1\in F(x_0)$.
\end{proof}

For $M=1$ and $d$ a metric, Proposition~\ref{prop:complete} is the classical set-valued contraction principle \cite{Nadler1969} for the one-sided excess; for $d$ a symmetric $b$-metric with constant $M=s$ it is the corresponding $b$-metric statement \cite{Czerwik2022}, the requirement $M\lambda<1$ being the familiar $s\lambda<1$; and for asymmetric $d$ it is new. The three cases are thus unified.

Let $X=C[0,1]$ and, for a fixed $p\ge1$, set
\[
d(x,y):=\|x-y\|_\infty^{\,p},\qquad\|u\|_\infty:=\sup_{t\in[0,1]}|u(t)| .
\]
Since $r\mapsto r^{p}$ is a homeomorphism of $[0,\infty)$, the topology of $d$ is that of uniform convergence and $d_1=\|\cdot\|_\infty$ induces it, so $(X,d_1)$ is complete; convexity of $r\mapsto r^{p}$ gives $|a+b|^{p}\le2^{p-1}(|a|^{p}+|b|^{p})$, so \eqref{eq:M} holds with $M=2^{\,p-1}\ge1$. Thus $(C[0,1],d)$ is a generalized metric space, an ordinary metric space when $p=1$ and a genuine $b$-metric space when $p>1$. Note that $K:=C[0,1]$ is unbounded in $d$, which is admissible by Remark~\ref{rem:unbounded}.

Let $\Gamma:[0,1]\times\R\to\R$ have nonempty compact values and consider the integral inclusion
\begin{equation}\label{eq:inclusion}
x(t)\ \in\ \int_0^{t}\Gamma\big(s,x(s)\big)\,ds,\qquad t\in[0,1],
\end{equation}
a solution being $x\in C[0,1]$ for which some integrable selection $g(s)\in\Gamma(s,x(s))$ satisfies $x(t)=\int_0^{t}g(s)\,ds$. Assume:
\begin{enumerate}[label=\textnormal{(\roman*)}]
\item $s\mapsto\Gamma(s,u)$ is measurable for each $u$, and $\sup\{|z|:z\in\Gamma(s,u)\}\le c(1+|u|)$ for a constant $c>0$;
\item $H_\R\big(\Gamma(s,u),\Gamma(s,v)\big)\le\kappa|u-v|$ for all $s\in[0,1]$ and $u,v\in\R$, where $H_\R$ is the Hausdorff distance on compact subsets of $\R$;
\item $2^{\,p-1}\kappa^{\,p}<1$, that is, $\kappa<2^{(1-p)/p}$.
\end{enumerate}

\begin{theorem}\label{thm:inclusion}
Under \textnormal{(i)--(iii)} the integral inclusion \eqref{eq:inclusion} admits a solution in $C[0,1]$.
\end{theorem}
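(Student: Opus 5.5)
The plan is to obtain the solution from Proposition~\ref{prop:complete}, that is, from Theorem~\ref{thm:main} with the complete graph graded $1$. I will not run it on $C[0,1]$ directly with $F(x)=\{\int_0^{\cdot}g: g \text{ a selection of }\Gamma(\cdot,x(\cdot))\}$. The reason is that with nonconvex compact values $\Gamma$ this set of primitives is in general not closed in $\|\cdot\|_\infty$: by the relaxation/Lyapunov phenomenon its closure is the set of primitives of selections of $\operatorname{conv}\Gamma$. A fixed point $x\in\overline{F(x)}$ would then only solve the convexified inclusion. Closedness of values, which Step~4 of Theorem~\ref{thm:main} consumes, is therefore the main obstacle.

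To get around it I will work on the derivatives. Take $X=K=L^\infty[0,1]$ with $d(g,h):=\|g-h\|_\infty^{\,p}$. As in the paragraph preceding the theorem, this satisfies \eqref{eq:M} with $M=2^{p-1}$, Assumption~\ref{ass:A1} holds because $d$ is symmetric, and $d_1=\|\cdot\|_\infty$ is complete. Write $(Ig)(t):=\int_0^t g(s)\,ds$ and define
\[
\Phi(g):=\{u\in L^\infty[0,1]:\ u(s)\in\Gamma\big(s,(Ig)(s)\big)\ \text{for a.e. } s\}.
\]
\begin{enumerate}[label=\textnormal{(\alph*)}]
\item \emph{Nonemptiness.} By (i) and the Lipschitz condition (ii), $\Gamma$ is measurable in $s$ and continuous in $u$, so $s\mapsto\Gamma(s,(Ig)(s))$ is a measurable compact-valued multifunction. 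Kuratowski--Ryll-Nardzewski then gives a measurable selection. The growth bound (i) together with $|(Ig)(s)|\le\|g\|_\infty$ makes that selection essentially bounded.
\item \emph{Boundedness.} By the same growth estimate, $\Phi(g)$ is contained in the ball of radius $c(1+\|g\|_\infty)$, so it is $d$-bounded.
\item \emph{Closedness.} If $u_n\in\Phi(g)$ and $u_n\to u$ in $L^\infty$, then $u_n(s)\to u(s)$ for a.e.\ $s$. Since $\Gamma(s,(Ig)(s))$ is closed, $u(s)$ lies in it a.e., so $\Phi(g)\in CB(K)$.
\end{enumerate}

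Next comes the contractive estimate, a Filippov-type step. Given $u\in\Phi(g)$, choose measurably a nearest point $v(s)\in\Gamma(s,(Ih)(s))$ to $u(s)$. The argmin multifunction is measurable with nonempty compact values, so Kuratowski--Ryll-Nardzewski applies again. By (ii),
\[
|u(s)-v(s)|\le\kappa\,|(Ig)(s)-(Ih)(s)|\le\kappa\|g-h\|_\infty .
\]
Hence $v\in\Phi(h)$ and $d(u,\Phi(h))\le\kappa^p d(g,h)$, i.e.\ $H(\Phi g,\Phi h)\le\kappa^p d(g,h)$ for all $g,h$. With $\lambda:=\kappa^p$, hypothesis (iii) is exactly $M\lambda<1$, and in particular $\lambda<1$.

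Proposition~\ref{prop:complete} (which, by Remark~\ref{rem:unbounded}, tolerates the unbounded $K$) now yields $g^\ast\in\Phi(g^\ast)$. Setting $x:=Ig^\ast\in C[0,1]$, the function $g^\ast$ is a bounded, hence integrable, selection with $g^\ast(s)\in\Gamma(s,x(s))$ a.e.\ and $x(t)=\int_0^t g^\ast$. Changing $g^\ast$ on a null set to any measurable selection there does not affect $x$, so $x$ solves \eqref{eq:inclusion}.

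The only delicate points are the measurability arguments in (a) and in the nearest-point selection. For these I would cite the standard facts on Carathéodory multifunctions rather than reprove them.
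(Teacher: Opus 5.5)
Your proof is correct, and it takes a genuinely different route from the paper's.

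The paper works directly in $C[0,1]$, with $F(x)$ the set of primitives of selections in $S(x)$. It claims $F(x)$ is compact: Arzel\`a--Ascoli gives relative compactness, and closedness is said to follow from compactness of the values of $\Gamma$. It then proves the same Filippov-type excess estimate $H(Fx,Fx')\le\kappa^{p}d(x,x')$ and applies Proposition~\ref{prop:complete}.

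Your reason for avoiding this route is sound. Take $\Gamma\equiv\{-1,1\}$, which satisfies (i)--(iii) with $\kappa=0$. Primitives with slopes alternating $\pm1$ on ever finer grids converge uniformly to $0$, and $0$ is not in $F(x)$. So without convex values $F(x)$ need not be closed. Step~4 of Theorem~\ref{thm:main} then only gives $x^\ast\in\overline{F(x^\ast)}$, which is in general a solution of the convexified inclusion.

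Lifting to derivatives removes exactly this problem:
\begin{itemize}
\item $\Phi(g)$ is closed in $L^\infty[0,1]$, because norm convergence there gives a.e.\ convergence.
\item The bound $|(Ig)(s)-(Ih)(s)|\le\|g-h\|_\infty$ lets (ii) pass through unchanged, with the same constant $\kappa^{p}$.
\item Theorem~\ref{thm:main} needs only closed bounded values, since Lemma~\ref{lem:selection} uses no compactness.
\end{itemize}

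What you give up is compactness of $\Phi(g)$; the $\{\pm1\}$-valued selections again form a non-compact closed set. So the exact selection of Remark~\ref{rem:exact} and the adaptive rate of Proposition~\ref{prop:adaptive} are not available, which is immaterial for existence. Where the paper's route is valid, for instance for convex-valued $\Gamma$, it has compact values and iterates lying directly in the solution space $C[0,1]$.
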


\begin{proof}
For $x\in C[0,1]$ let $S(x)$ be the set of measurable selections $g$ with $g(s)\in\Gamma(s,x(s))$ almost everywhere; $S(x)$ is nonempty by the Kuratowski--Ryll-Nardzewski selection theorem, and by (i) every such $g$ obeys $|g(s)|\le c(1+\|x\|_\infty)$ and is therefore integrable. Define
\[
F(x):=\Big\{y\in C[0,1]:\ y(t)=\int_0^{t}g(s)\,ds\ \text{for some}\ g\in S(x)\Big\}.
\]
Each $y\in F(x)$ satisfies $|y(t)|\le c(1+\|x\|_\infty)$ and $|y(t)-y(t')|\le c(1+\|x\|_\infty)|t-t'|$, so $F(x)$ is bounded and equicontinuous, hence relatively compact by the Arzel\`a--Ascoli theorem; closedness follows from compactness of the values of $\Gamma$. Thus $F(x)$ is a nonempty compact, in particular closed and bounded, subset of $C[0,1]$, so $F:K\to CB(K)$.

Let $x,x'\in C[0,1]$ and $y\in F(x)$ with selection $g\in S(x)$. By (ii), for almost every $s$ the set $\{z\in\Gamma(s,x'(s)):|g(s)-z|\le\kappa|x(s)-x'(s)|\}$ is nonempty, and admits a measurable selection $g'\in S(x')$ with $|g(s)-g'(s)|\le\kappa|x(s)-x'(s)|\le\kappa\|x-x'\|_\infty$ almost everywhere. Putting $y'(t):=\int_0^{t}g'(s)\,ds\in F(x')$,
\[
\|y-y'\|_\infty=\sup_{t}\Big|\int_0^{t}\big(g(s)-g'(s)\big)ds\Big|\le\int_0^{1}|g(s)-g'(s)|\,ds\le\kappa\|x-x'\|_\infty .
\]
Taking the supremum over $y\in F(x)$ gives $H_\infty(Fx,Fx')\le\kappa\|x-x'\|_\infty$ for the excess in the sup-metric, and since $r\mapsto r^{p}$ is increasing,
\[
H(Fx,Fx')=H_\infty(Fx,Fx')^{\,p}\le\kappa^{\,p}\|x-x'\|_\infty^{\,p}=\kappa^{\,p}\,d(x,x').
\]
So $F$ is a set-valued contraction on $(C[0,1],d)$ with constant $\lambda=\kappa^{\,p}$, and (iii) states exactly $M\lambda=2^{\,p-1}\kappa^{\,p}<1$. Propoition ~\ref{prop:complete} yields $x^\ast\in F(x^\ast)$, which is precisely a solution of \eqref{eq:inclusion}.
 
\end{proof}

The role of the constant $M$ is visible in (iii): for $p>1$ the underlying space is a genuine $b$-metric space with $M=2^{\,p-1}>1$, and the admissible Lipschitz constant $\kappa$ is reduced accordingly relative to the classical case $p=1$, $M=1$, where the requirement is simply $\kappa<1$.

We now put the weighted graph to genuine use on a finite directed network that is not complete, computing the invariant $N(\alpha)$, the admissible set $A$, and the grade-adaptive rate.
\begin{example}Let $X=\R$ with $d(x,y)=|x-y|$, so $M=1$, and let
\[
K=\{0,1,2,3,6\}.
\]
Define the branching set-valued map
\[
F(6)=\{2,3\},\quad F(3)=\{1\},\quad F(2)=\{0\},\quad F(1)=\{0\},\quad F(0)=\{0\},
\]
and let $E(G)$ consist of the loops together with the directed edges $(6,3),(6,2),(3,1),(2,0),(1,0)$ and their reversals, graded
\[
w(6,3)=w(6,2)=0.3,\qquad w(3,1)=w(2,0)=0.6,\qquad w(1,0)=1,
\]
loops graded $1$. The network and its operative level cut are shown in Figure~\ref{fig:digraph}. Take the family $\lambda_{0.3}=0.8$, $\lambda_{0.6}=0.5$, $\lambda_1=0$ (non-increasing, and $M\lambda_{0.3}=0.8<1$). Table~\ref{tab:digraph} verifies the contractive estimate on every non-loop edge; the largest excess ratio is $0.75$, on the weakest edge $(6,2)$, and each ratio is dominated by the factor $\lambda$ carried by that edge's grade.

\begin{figure}[H]
\centering
\caption{}
\label{fig:digraph}
\begin{tikzpicture}[
  every node/.style={circle,draw,inner sep=1.4pt,minimum size=7mm,font=\small},
  edg/.style={-{Stealth[length=2mm]},thick},
  gone/.style={-{Stealth[length=2mm]},draw=black!22,thick},
  lab/.style={draw=none,circle=false,inner sep=1pt,font=\scriptsize,fill=white}
]
\begin{scope}
\node (s6) at (0,0) {$6$};
\node (s3) at (2,0.9) {$3$};
\node (s2) at (2,-0.9) {$2$};
\node (s1) at (4,0.9) {$1$};
\node (s0) at (6,0) {$0$};
\draw[edg] (s6)--(s3) node[lab,midway,above left]{$0.3$};
\draw[edg] (s6)--(s2) node[lab,midway,below left]{$0.3$};
\draw[edg] (s3)--(s1) node[lab,midway,above]{$0.6$};
\draw[edg] (s1)--(s0) node[lab,midway,above right]{$1$};
\draw[edg] (s2)--(s0) node[lab,midway,below right]{$0.6$};
\node[draw=none,font=\small] at (3,-1.9) {(a) $(G,w)$};
\end{scope}
\begin{scope}[xshift=8.4cm]
\node (t6) at (0,0) {$6$};
\node (t3) at (1.7,0.9) {$3$};
\node (t2) at (1.7,-0.9) {$2$};
\node (t1) at (3.4,0.9) {$1$};
\node (t0) at (5.1,0) {$0$};
\draw[gone] (t6)--(t3);
\draw[gone] (t6)--(t2);
\draw[edg] (t3)--(t1);
\draw[edg] (t1)--(t0);
\draw[edg] (t2)--(t0);
\node[draw=none,font=\small] at (2.55,-1.9) {(b) $G_\alpha$, $\alpha=0.6$:\ $6$ severed};
\end{scope}
\end{tikzpicture}
\end{figure}

\begin{table}[H]
\centering
\caption{Verification of the contractive estimate on the network}
\label{tab:digraph}
\begin{tabular}{c c c c c c}
\hline
edge $(x,y)$ & $F(x),\,F(y)$ & $H(Fx,Fy)$ & $d(x,y)$ & $w(x,y)$ & bound $\lambda_{w(x,y)}\,d(x,y)$\\
\hline
$(6,3)$ & $\{2,3\},\,\{1\}$ & $2$ & $3$ & $0.3$ & $0.8\cdot3=2.4$\\
$(6,2)$ & $\{2,3\},\,\{0\}$ & $3$ & $4$ & $0.3$ & $0.8\cdot4=3.2$\\
$(3,1)$ & $\{1\},\,\{0\}$ & $1$ & $2$ & $0.6$ & $0.5\cdot2=1.0$\\
$(2,0)$ & $\{0\},\,\{0\}$ & $0$ & $2$ & $0.6$ & $0.5\cdot2=1.0$\\
$(1,0)$ & $\{0\},\,\{0\}$ & $0$ & $1$ & $1$ & $0\cdot1=0$\\
\hline
\end{tabular}
\end{table}

The level cuts are governed by the two non-trivial grades. For $\alpha\le0.3$ all five edges survive and $\widetilde G_\alpha$ is connected with diameter $N(\alpha)=2$; for $0.3<\alpha\le0.6$ the two grade-$0.3$ edges out of the vertex $6$ are deleted, leaving $6$ with no non-loop edge, so $\widetilde G_\alpha$ is disconnected and $N(\alpha)=\infty$; the same holds for $\alpha>0.6$. Hence
\[
A=(0,\,0.3],\qquad N(\alpha)=2\ \text{on}\ A .
\]
So the only globally admissible threshold is $\alpha=0.3$, at which the plain rate \eqref{eq:exactgap} guarantees no better than $\lambda_{0.3}^{\,n}=0.8^{\,n}$ -- a slow geometric decay that never certifies termination. The grade-adaptive rate is another matter. Starting from $x_0=6$ and selecting $x_1=3$, the orbit
\[
6\ \longrightarrow\ 3\ \longrightarrow\ 1\ \longrightarrow\ 0
\]
has edge grades $g_1=w(6,3)=0.3$, $g_2=w(3,1)=0.6$, $g_3=w(1,0)=1$, ascending as Proposition~\ref{prop:adaptive} requires, with gaps $a_1=3,a_2=2,a_3=1$. The adaptive bound \eqref{eq:adaptive} gives
\[
a_2\le\lambda_{0.3}a_1=2.4,\qquad a_3\le\lambda_{0.3}\lambda_{0.6}a_1=1.2,\qquad a_4\le\lambda_{0.3}\lambda_{0.6}\lambda_1 a_1=0,
\]
correctly certifying that the fixed point $0$ is reached at the third step, whereas the plain bound only offers $a_4\le0.8^{3}\cdot3=1.536$. The other selection $x_1=2$ produces the shorter orbit $6\to2\to0$, again terminating, with grades $0.3,0.6$; the two branches illustrate the non-uniqueness of the orbit intrinsic to the set-valued setting. This is the sense in which, on a network where the weakest link forces a poor uniform threshold, the weights nonetheless deliver a sharp, indeed exact, guarantee once the iteration climbs into the strongly graded interior.

\end{example}

\section{Conclusion}\label{sec:conclusion}

We obtained the set-valued contraction principle into generalized metric spaces carrying a weighted graph. The distance may be asymmetric and satisfy only a relaxed triangle inequality controlled by a constant $M$, while the contraction is imposed solely along graph edges and may depend on the grade each edge carries. Theorem~\ref{thm:main} establishes the existence of a fixed point together with an explicit geometric rate governed by the product $M\lambda_\alpha<1$; the boundedness of the underlying set is not required, which allows the theory to be applied on unbounded function spaces. Proposition~\ref{prop:adaptive} shows that the weighting is not merely bookkeeping but sharpens the rate: because the grades encountered along the orbit ascend, the contraction factors actually applied decrease, and the resulting estimate can outrun any single-threshold bound and certify exact termination. Proposition~\ref{prop:tradeoff} shows that the entry threshold itself is subject to a genuine trade-off -- a higher threshold sharpens the guarantee but depletes the supply of initial points -- whose optimum is in general interior, as Example~\ref{ex:interval} and Figure~\ref{fig:tradeoff} demonstrate quantitatively. Proposition~\ref{prop:nonunique} shows that there is no uniqueness. Corollary~\ref{cor:finite} shows that on finite networks the approximation terminates after finitely many steps, and the applications recover the classical and $b$-metric statements, solve a class of integral inclusions, and, on a partial directed network, exhibit the weighted machinery and the invariant $N(\alpha)$ in explicit computation. In future we plan to further explore; First a chain-based estimate valid uniformly over all pairs of initial points. Second to admit contraction data varying continuously with the edge rather than through finitely many grade levels, and to characterize the optimal threshold by a first-order condition rather than by comparison of finitely many branches.

\section*{Declarations}

\noindent\textbf{Conflict of interest.} The authors declare that they have no conflict of interest.

\noindent\textbf{Data availability.} No datasets were generated or analysed during the current study. All numerical values reported in Tables~\ref{tab:convergence} \& \ref{tab:digraph} and Figures~\ref{fig:convergence} \& \ref{fig:tradeoff}  are computed directly from the closed-form data of the corresponding examples.

\noindent\textbf{Funding.} The authors received no specific funding for this work.

\noindent\textbf{Author contributions.} Both authors contributed equally to the conception, the proofs and the writing, and both approved the final manuscript.

\end{document}